\definecolor{myblue}{rgb}{0.09,0.32,0.44} 
\edef\sectionsign{\S}
\newcommand{\defeq}{\mathrel{\mathop:}=}
\newcommand{\R}{\mathbb{R}}
\renewcommand{\S}{\mathbb{S}}
\newcommand{\N}{\mathbb{N}}
\newcommand{\Z}{\mathbb{Z}}
\newcommand{\indic}{{\bf 1}}
\newcommand{\Cc}{\widehat C}
\newcommand{\Ct}{\widetilde C}
\newcommand{\omegat}{{\widetilde\omega}}
\newcommand{\Cr}{\mathcal C}
\newcommand{\Tr}{\mathcal T}
\newcommand{\Ur}{\mathcal U}
\newcommand{\limites}[2]{\overset{#1}{\underset{#2}{\longrightarrow}}}
\newcommand{\dm}{\begin{pmatrix}} 
\newcommand{\fm}{\end{pmatrix}}
\newcommand{\ddm}{\begin{vmatrix}} 
\newcommand{\fdm}{\end{vmatrix}}
\newcommand{\st}{\,:\,} 
\newcommand{\prob}{\mathbb{P}}
\DeclareMathOperator{\Tri}{Tri}
\newcommand{\npath}{n_{\rm paths}}
\newtheorem{theorem}{Theorem}
\newtheorem{lemma}{Lemma}
\newtheorem*{remark*}{Remark}
\theoremstyle{remark}
\newtheorem{definition}{Definition}
\newtheorem*{conjecture*}{Conjecture}
\author[N. Curien]{Nicolas Curien}
\address{Laboratoire de math\'ematique, Universit\'e Paris-Sud, 91400 Orsay, France}
\email{nicolas.curien@gmail.com}
\author[G. Kozma]{Gady Kozma}
\address{Weizmann institute of Science, 76100, Rehovot, Israel}
\email{gady.kozma@weizmann.ac.il}
\author[V. Sidoravicius]{Vladas Sidoravicius}
\address{Courant Institute of Mathematical Sciences, New York\\
NYU-ECNU Institute of Mathematical Sciences at NYU Shanghai\\
Cemaden, S\~ao Jos\'e dos Campos.}
\email{vs1138@nyu.edu}
\author[L. Tournier]{Laurent Tournier}
\address{LAGA, Universit\'e Paris 13, Sorbonne Paris Cit\'e, CNRS, UMR 7539, 93430 Villetaneuse, France. 
 }
\email{tournier@math.univ-paris13.fr}
\keywords{}
\subjclass[2010]{primary ; secondary }
\thanks{The research of G.K. was supported by the Israel Science Foundation.}
\title{}
\newcommand{\Li}{\ldbrack} 
\newcommand{\Ri}{\rdbrack}
\begin{document}
\begin{center}
\Large \textbf{ \textsc{Uniqueness of the infinite}}  \includegraphics[height=1.5cm]{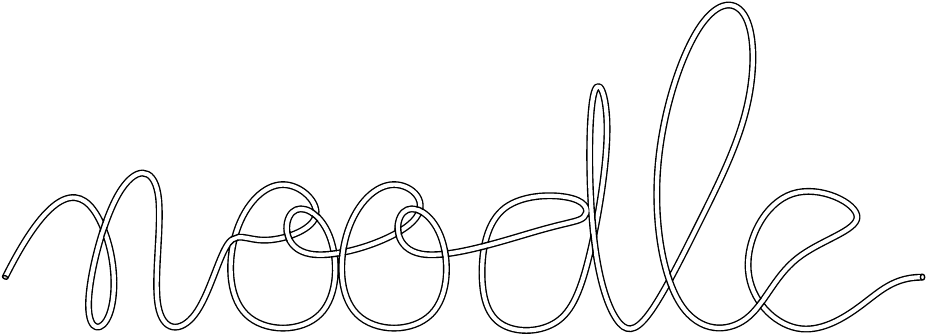}
\end{center}
\maketitle

{\footnotesize \noindent{\slshape\bfseries Abstract.} 
Consider the graph obtained by superposition of an independent pair of uniform infinite non-crossing perfect matchings of the set of integers. We prove that this graph contains at most one infinite path. Several motivations are discussed.} 

\begin{figure}[!h]
 \begin{center}
 \includegraphics[width=5cm]{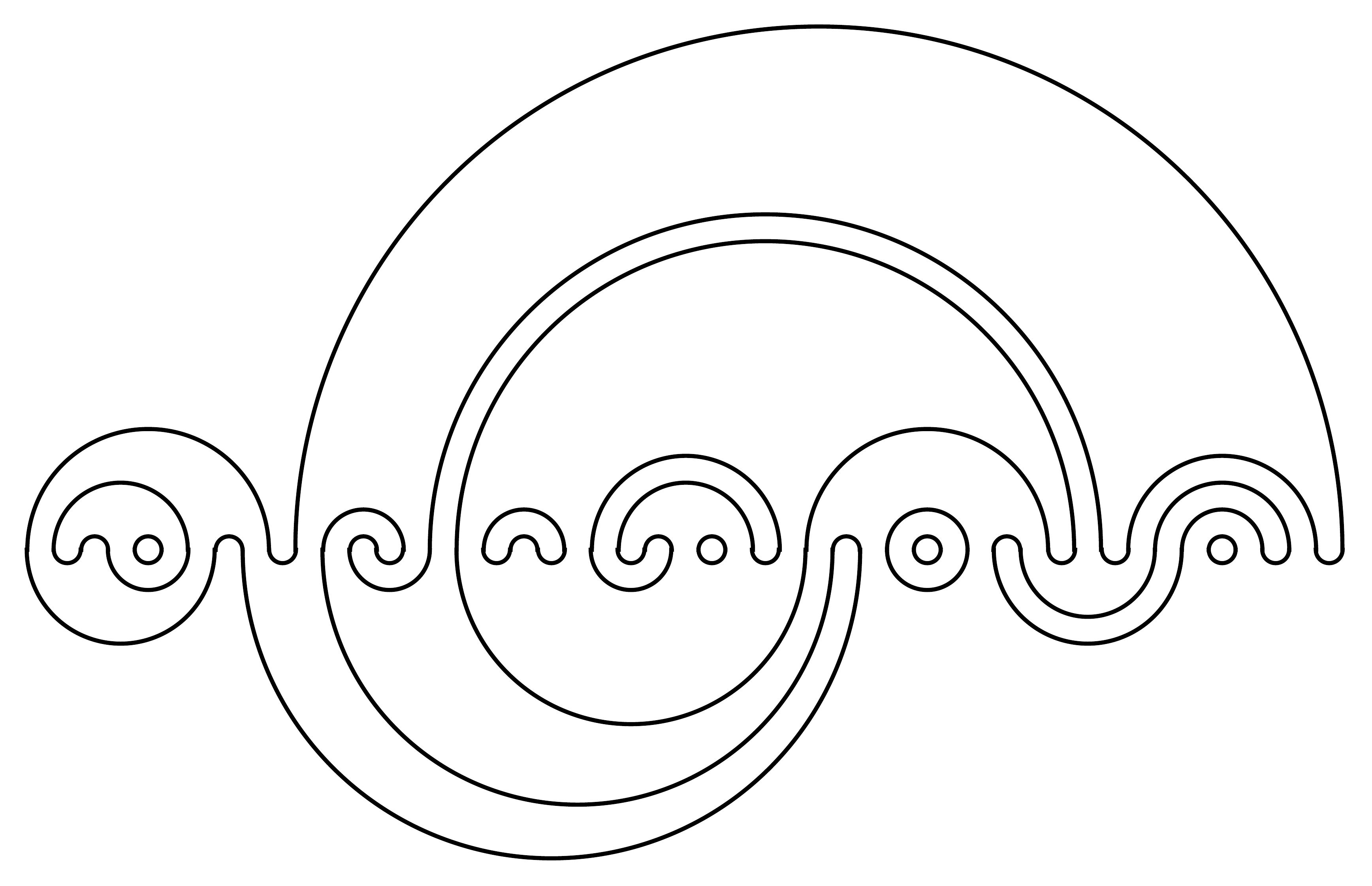} \hspace{1cm}
  \includegraphics[width=5cm]{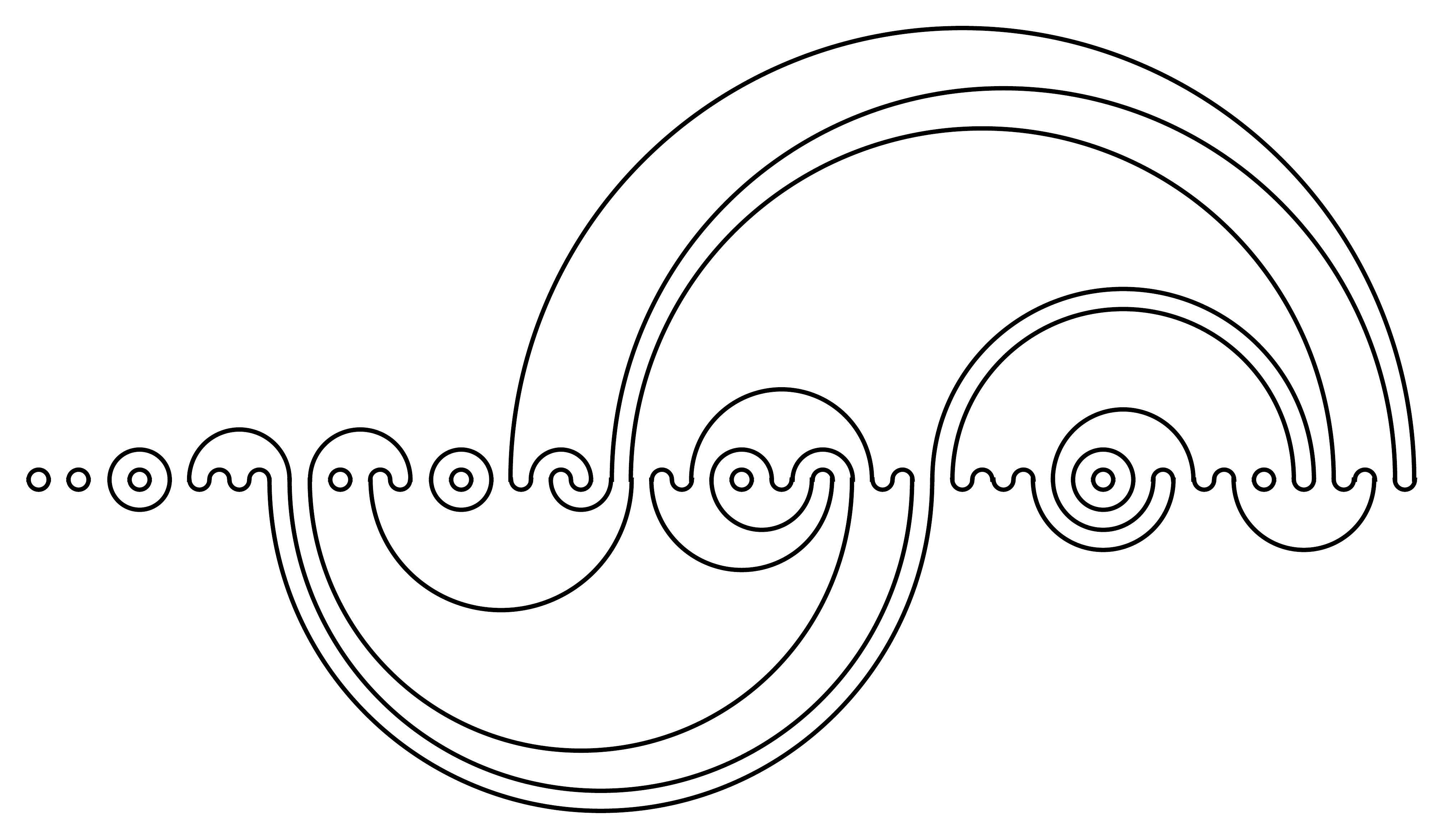} \hspace{1cm}
  \includegraphics[width=5cm]{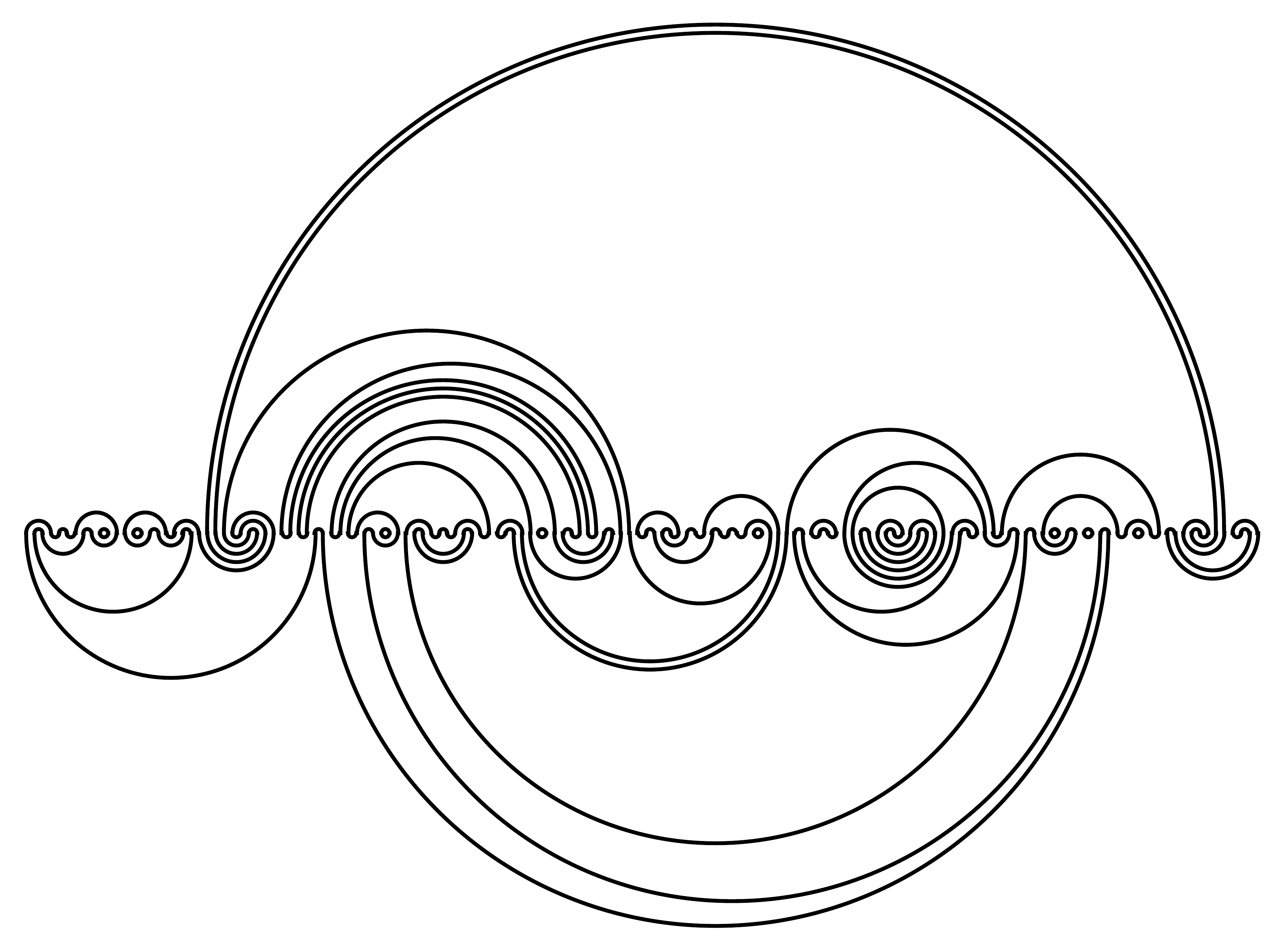}
 \caption{ \label{fig:intro}	The graph obtained by the superposition of two independent planar perfect matchings on respectively 50, 70 and 150 points. Our main result concerns infinite clusters in the infinite version of this construction.}
 \end{center}
 \end{figure}

\section{Introduction}
   \label{s:intro}
   Let us start by defining the model. On the integer line $ \mathbb{Z}$ we sample independently for each site $x \in \mathbb{Z}$ two independent variables $\omega_{x}^{+}$ and $\omega_{x}^{-}$ uniformly in the set $\{+1,-1\}$. We interpret $\omega_{x}^{+}$  (resp.~$\omega_{x}^{-}$) as being a parenthesis: an opening parenthesis for $+1$ and closing one for $-1$ living on the upper half-plane (resp.~lower half-plane). By pairing the parentheses, it is then standard that $(\omega_{x}^{+})_{ x \in \mathbb{Z}}$ yields a perfect matching on $ \mathbb{Z}$, i.e.~an involution  of $ \mathbb{Z}$ without fixed points. This matching is furthermore planar, meaning that we can draw arcs between paired points so that the arcs are non crossing. In most of the following drawings, these arcs will be tents or semi-circles. We  repeat this construction twice, once with the parentheses of the upper half plane, and once in lower half plane. After gluing the top and bottom arches on points of $ \mathbb{Z}$ we are left with a random (multi)graph $G$ with $\npath \in \{0,1,2,\ldots\} \cup \{\infty\}$ many infinite clusters (actually bi-infinite paths).
   
   \begin{theorem} Either $\npath =0$ almost surely or $\npath=1$ almost surely.
\end{theorem}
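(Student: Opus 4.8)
The plan is to use ergodicity to reduce to ruling out $\npath\ge 2$, then to extract from two bi-infinite paths an infinite ``sub-universe'' carrying a non-crossing matching pair with no bi-infinite path, and finally to contradict this by a surgery argument; the last step is where the difficulty lies.

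\emph{Step 1 (reduction).} Since $(\omega^+_x,\omega^-_x)_{x\in\mathbb Z}$ is i.i.d., the shift by one site is measure-preserving and ergodic, and $\npath$ is invariant under it (the graph merely gets translated); hence $\npath$ equals an a.s.\ constant $k\in\{0,1,2,\dots\}\cup\{\infty\}$, and it is enough to contradict $k\ge 2$.

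\emph{Step 2 (a structural fact I will use).} For a half-integer cut $c$ write $N^{+}(c)$ for the number of upper arcs straddling $c$. Pushing $c$ past a site $n$ changes $N^{+}$ by $\omega^{+}_n$, so $N^{+}(n+\tfrac12)=N^{+}(\tfrac12)+\sum_{k=1}^{n}\omega^{+}_{k}$; since the right-hand side must stay $\ge 0$ for all $n$ while the two-sided simple walk $\sum_{1}^{n}\omega^{+}_{k}$ is a.s.\ unbounded, $N^{+}(\tfrac12)=\infty$. Thus a.s.\ every cut is straddled by infinitely many arcs of each matching. In particular there is no ``renewal cut'' across which the configuration splits through a finite interface, which is exactly why a naive Burton--Keane finite-energy argument fails: flipping finitely many $\omega^{\pm}$ globally rearranges both matchings, hence $\npath$.

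\emph{Step 3 (topological decomposition).} The graph $G$ is drawn in the plane with no crossings (the two matchings live in opposite half-planes, each non-crossing), so distinct bi-infinite paths are disjoint simple curves. Each bi-infinite path $P$ visits infinitely many distinct integers, hence leaves every bounded set, so $P\cup\{\infty\}$ is a Jordan curve on the sphere and $P$ separates the plane. Assume $k\ge 2$; the bi-infinite paths then cut the plane into regions. If $W$ is one of these regions and an upper arc has an endpoint in $W$, that arc cannot meet $\partial W$ (it is an upper arc, hence non-crossing with the upper arcs of the paths bounding $W$, whose lower arcs lie in the other half-plane); so $M^{+}$ and $M^{-}$ both restrict to non-crossing perfect matchings of $W\cap\mathbb Z$, whose superposition is precisely the union of the components of $G$ inside $W$ --- none of them bi-infinite. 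Since the model a.s.\ has infinitely many finite loops (for infinitely many $x$, the local events $\{\omega^{+}_x=+1,\ \omega^{+}_{x+1}=-1\}$ and $\{\omega^{-}_x=+1,\ \omega^{-}_{x+1}=-1\}$ both hold, forcing $M^{+}(x)=M^{-}(x)=x+1$ and a $2$-cycle $\{x,x+1\}$), some region $W$ contains infinitely many loop vertices, so $S:=W\cap\mathbb Z$ is infinite. Hence $k\ge 2$ produces an infinite $S\subset\mathbb Z$ on which the two matchings restrict to a non-crossing pair whose superposition has no bi-infinite path.

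\emph{Step 4 (the crux, and the main obstacle).} This cannot be ruled out combinatorially --- on $\mathbb Z$ the ``all short arcs'' pair is loop-only, and the pair of reflections $x\mapsto 1-x$ and $x\mapsto 5-x$ is non-crossing, straddles every cut infinitely often, and produces exactly two bi-infinite paths and no loop --- so the argument must use the randomness. My plan is: (i) prove a domain-Markov property, namely that conditionally on the full, infinite, nested family of arcs straddling a fixed cut, the pieces of the configuration inside the resulting ``lunes'' are independent, each a balanced-excursion version of the model; (ii) deduce genuine insertion tolerance inside a lune (a finite block of coordinates of $\omega^{\pm}$ may be resampled without disturbing that lune's boundary arcs) and use it to splice, with positive probability, two loop components of $G|_S$ into one bi-infinite path, contradicting ``loop-only'' and so contradicting Step 1; (iii) dispose of $k=\infty$ either by the same scheme applied to any two bi-infinite paths or by first reducing to $k<\infty$. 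I expect the real work to be (i): making sense of conditioning on an \emph{infinite} interface and proving the stated independence is precisely the price of $N^{\pm}\equiv\infty$, and once it is available the surgery in (ii) and the clash with Step 1 should be routine.
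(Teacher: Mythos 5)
Your Steps 1--3 are fine (ergodicity, the fact that every cut is straddled by infinitely many arcs, and the observation that a complementary face $W$ of the union of the bi-infinite paths carries restricted perfect matchings whose superposition consists only of finite loops). But Step 4, which you yourself call the crux, is only a plan, and its pivotal move cannot work. Resampling a finite block of coordinates cannot turn finite loops into a bi-infinite path (nor destroy a bi-infinite path): outside the modified window the configuration, hence the matching, is unchanged, so the components of $G$ meeting the window are obtained by re-pairing the finitely many external ends; the number of infinite components meeting the window equals half the number of semi-infinite external ends and is therefore determined by the outside configuration alone. Inside your region $W$ there are no semi-infinite ends at all, so after any finite surgery every component meeting the window is still a finite loop --- there is no contradiction to extract. (This is exactly the obstruction the paper points out: local modifications cannot change $\npath$, only rewire the existing infinite paths.) In addition, your lemma (i) is both unproven and doubtful as stated: the ``lunes'' are delimited by arcs of one matching while arcs of the other matching cross their boundaries freely, and the set $S=W\cap\Z$ is itself a function of $\omega$ and changes when you resample, so the conditioning you need is not the clean product structure you assert. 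Finally, your Step 2, while true, is tangential: what matters for finite-energy surgery is the set of arcs with exactly one endpoint in a fixed finite interval, which is finite (it is the range of the two walks), and this is precisely what the paper controls.

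The paper's proof goes differently at exactly the point where your plan stalls, and in two separate ways. For $\npath\ge3$ it runs a Burton--Keane argument not on the paths but on the complementary faces: the faces of the embedding of the infinite paths form a tree, a suitably defined trifurcation point can be created with positive probability by a local modification that preserves the dangling ends of a finite interval (Lemmas~\ref{lem:local_config}, \ref{lem:finite_energy}), and a counting lemma bounds the number of trifurcation points in $\Li1,N\Ri$ by the number of infinite paths meeting it, which contradicts the $o(N)$ bound on boundary ends (Lemma~\ref{lem:size_boundary}). For $\npath=2$ it accepts that $\npath$ cannot be changed locally and instead rewires: on an event of probability bounded away from $0$, a modification inside $\Li0,N\Ri$ preserving the ends re-pairs the four semi-infinite ends so that one of the two resulting infinite paths no longer visits a distant interval $\Li N+r,N+r+M\Ri$, while that interval is visited by both paths with probability at least $1-\frac14 4^{-N}$; the finite energy estimate then yields a contradiction. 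If you want to salvage your approach, you would need a mechanism of this second kind --- one that exploits where the infinite paths go, not an attempt to create or destroy them by finite surgery.
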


Unfortunately, we have not been able to decide which of the two alternatives actually holds and leave this as an open question. Before moving to the proof, let us present a few motivations for studying this model, except of course of its intrinsic beauty, see Figures~\ref{fig:intro} and~\ref{fig:one_cycle}.  \medskip

\begin{figure}[]
 \begin{center}
 \includegraphics[width=5cm,angle=-90]{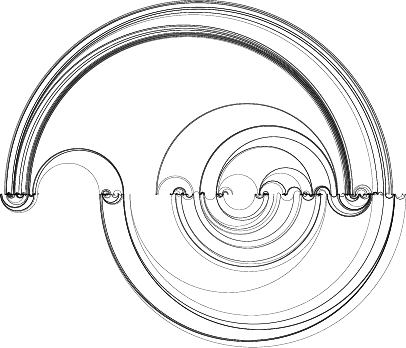}
 \caption{\label{fig:one_cycle}	The cluster of 0 in a typical simulation of the infinite model. This cycle contains 2936 sites.}
 \end{center}
 \end{figure}

\paragraph{\textsc{Random planar maps and Liouville quantum gravity}.}  In the last decades, the geometry of random uniform planar graphs (or maps) has been studied intensively and is now quite well understood \cite{LeGallICM}. The large scale structure of \emph{decorated} planar maps is much less understood and virtually nothing is rigorously known on the asymptotic geometry of these objects. One of the simplest model is that of triangulations given with a spanning tree. Upon cutting along the spanning tree, such a triangulation can be seen as a binary tree of triangles and an ``outside'' planar matching. If we further blow this tree of triangles we end up with a discrete cycle with two systems of non-crossing arches (both counted by Catalan numbers), an outside one identifying pairs of edges and an inside one connecting points. See Figure~\ref{fig:trig}. 

\begin{figure}[!h]
 \begin{center}
 \includegraphics[width=15cm]{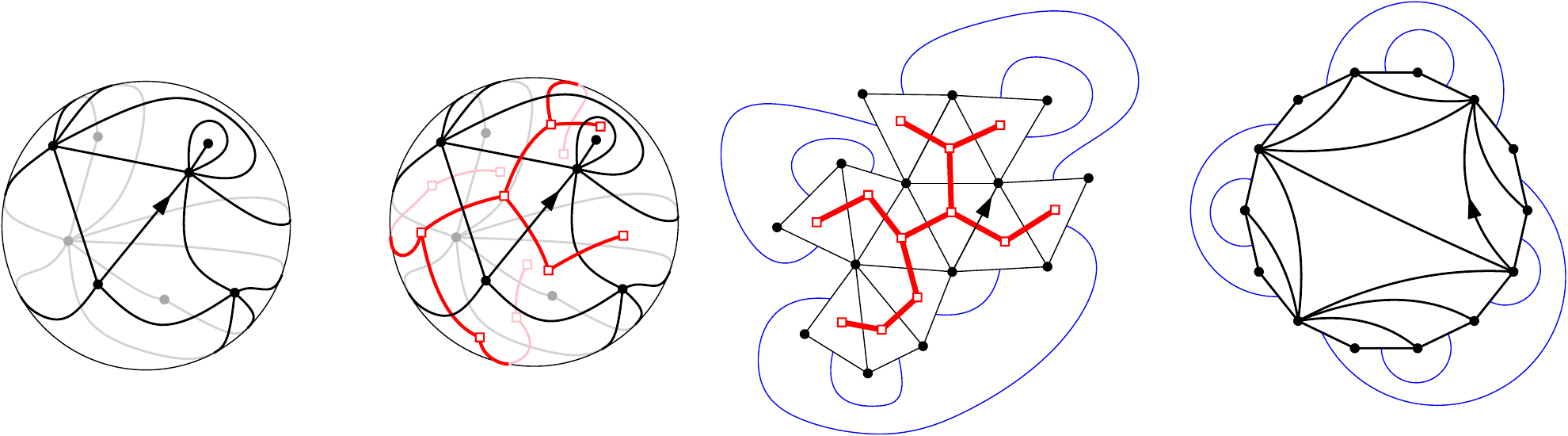}

 \caption{ \label{fig:trig} A triangulation with a spanning tree, cut along its spanning tree and seen as a cycle with two systems of arches.}
 \end{center}
 \end{figure}
 
 Hence our model can be seen as a (infinite) simplified version of the above construction where the two systems of arches play a symmetric role. We hope that the insight we get in studying our model will be useful to understand the geometry of tree-decorated maps. In the continuous setting, the idea  to glue a pair of random trees (which can be seen as continuous limit of planar perfect matchings) appears in the construction of the Brownian map \cite{MM06,LG07} and in Liouville quantum gravity \cite{SheHC,DMSgluingoftrees}.

\medskip
\paragraph{\textsc{Meanders}.} A meander is a self-avoiding closed loop crossing a horizontal line, seen up to topological equivalence, i.e.\ up to a homeomorphism of the plane preserving the horizontal line. In our model, this correspond to gluing two (finite) planar perfect matchings so that the resulting graph $G$ is connected, in other words, the finite clusters in our graph $G$ are meanders. The (asymptotic) enumeration of meanders is a notorious difficult open problem in combinatorics and in theoretical physics, see \cite{GuiHDR}. Our model in contrast allows the explicit computation of several quantities; let us mention for instance that the expected number of circles, i.e.~length 2 clusters, going around or passing through a given vertex, turns out to have the surprisingly simple value $\frac1{2\pi}$. 

\begin{figure}[!h]
 \begin{center}
 \includegraphics[width=4cm]{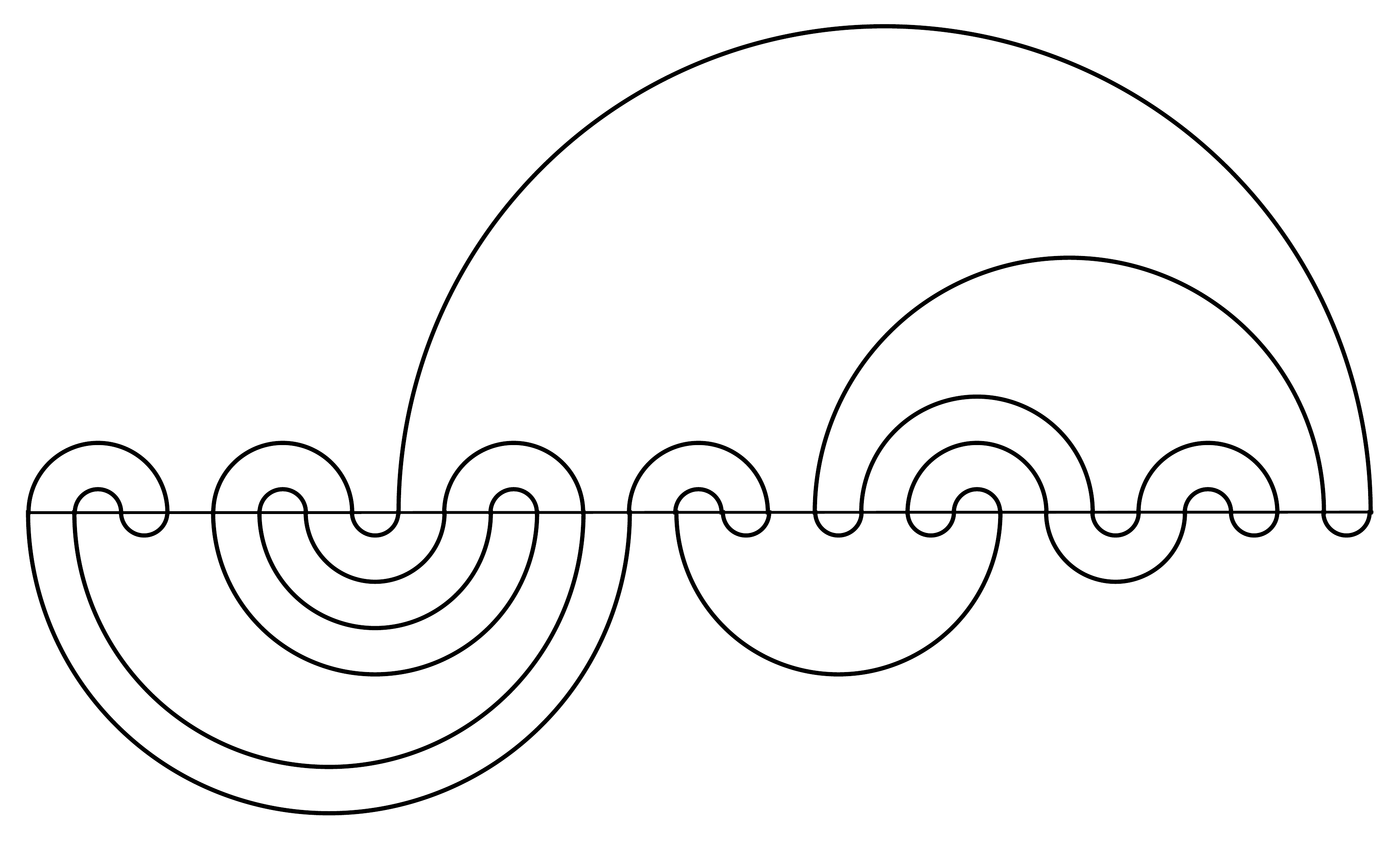} \hspace{2cm}
  \includegraphics[width=4cm]{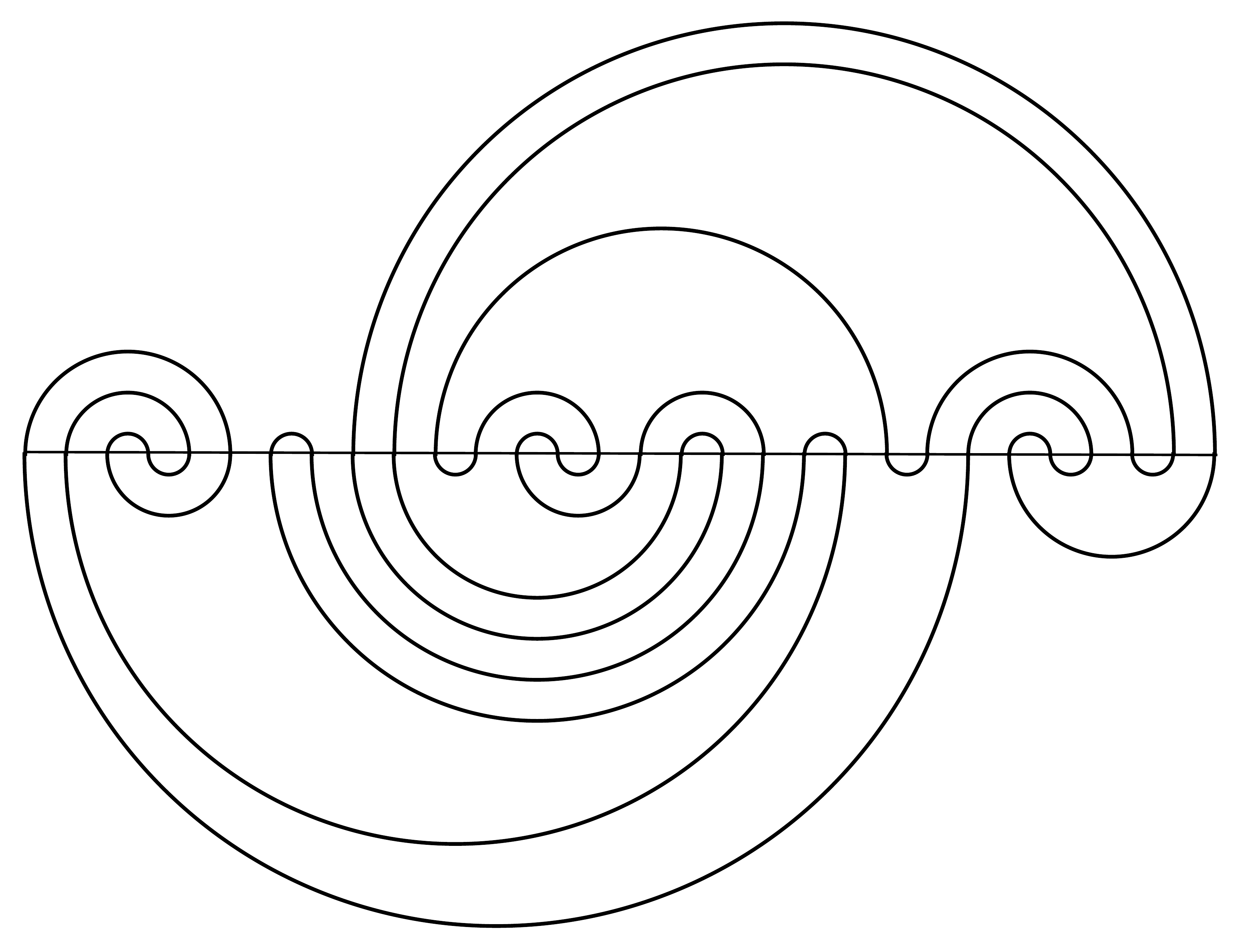}
 \caption{Two random meanders over $30$ vertices.}
 \end{center}
 \end{figure}

\paragraph{\textsc{Proof sketch}.}The proof resembles the Burton-Keane argument for percolation. We first note that $\npath$ is constant a.s.\ due to ergodicity. We then discard the case $\npath > 2$ (infinity included) by a trifurcation argument. The argument does not apply to the paths themselves, of course, but to the space between them (here planarity is crucial). This argument is presented in \sectionsign \ref{sec:trifu}. The argument that precludes the case $\npath=2$ is a local modification argument, but is complicated by the fact that, in fact, a local modification cannot change the number of infinite paths. It can, however, wire them differently i.e.\ make a new path by joining the tails of two existing paths (after some parity issues are resolved). In \sectionsign\ref{sec:rewire} we explain why this is enough.

\medskip
\noindent As already stated, we believe that $\npath=0$. In fact, we believe this holds in much greater generality, here is the precise formulation:
\begin{conjecture*}Let $F^+$ be a random matching of $\Z$ (not necessarily planar), stationary and ergodic with respect to the action of translation by $x$, for every $x\in\Z$. Let $F^-$ be a second matching of $\Z$, also stationary and ergodic to the action of all translations. Assume $F^+$ and $F^-$ are independent (but not necessarily identically distributed). Let $G$ be the random graph whose vertex set is $\Z$ and edge set is the union of $F^+$ and $F^-$. Then $G$ does not have a \emph{unique} infinite cluster, almost surely.
\end{conjecture*}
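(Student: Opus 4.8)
The plan is to attempt to rule out the value $\npath=1$; since all other values are permitted by the statement, it suffices to prove $\P(\text{exactly one bi-infinite path})=0$. First recall the rigid local structure: as $F^+$ and $F^-$ are perfect matchings, every vertex of $G$ carries exactly one edge of each type, so $G$ is $2$-regular and each connected component is either a finite cycle or a bi-infinite path, in both cases alternating between $F^+$- and $F^-$-edges. Thus ``infinite cluster'' is synonymous with ``bi-infinite alternating path'', and $\npath$ counts these. The event $\{\npath=1\}$ is invariant under the diagonal shift, so I may condition on it (assuming for contradiction it has positive probability): the conditioned law is again stationary and now has a unique path $P$ almost surely. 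Since $V(P)$ is then a translation-invariant, a.s.\ infinite random subset of $\Z$, it is a.s.\ unbounded both above and below --- otherwise its infimum would be a finite, translation-covariant $\Z$-valued random variable, which is impossible. Hence $P$ cannot have both ends confined to one side, and \emph{up to the oscillation issue discussed below} its two ends escape to $-\infty$ and to $+\infty$ respectively, i.e.\ $P$ is a \emph{crossing} path.

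Second, I would record a genuine $\pmod 2$ invariant. For the gap $g_n$ between $n$ and $n+1$ let $A_n$ (resp.\ $B_n$) be the number of $F^+$- (resp.\ $F^-$-) edges straddling $g_n$. Summing degrees over the window $\{1,\dots,n\}$ shows that the number of edges leaving the window is even; discarding the edges straddling both $g_0$ and $g_n$ then gives $A_n+B_n\equiv A_0+B_0\pmod 2$ for every $n$, so the parity $\pi\defeq (A_0+B_0)\bmod 2$ is shift-invariant. On the other hand, at a fixed gap each finite cycle contributes an even number of crossings, while a bi-infinite path with finitely many crossings contributes an odd number exactly when its two ends lie on opposite sides of the gap. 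Consequently, whenever all gap-crossings are finite, $\pi$ equals the parity of the number of crossing paths; in particular a \emph{single} crossing path forces $\pi=1$. This parity bookkeeping is the natural non-planar analogue of the combinatorial input that the rewiring argument of Section~\ref{sec:rewire} draws from planarity, and $\pi=1$ is the statement I would seek to contradict.

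Third, the core of the attempted proof would be a \emph{local surgery}. A single ``switch'' replacing two $F^+$-edges $\{a,b\},\{c,d\}$ by $\{a,c\},\{b,d\}$ again yields a perfect matching and alters the component structure in a controlled way: applied to two edges of $P$ it either splits $P$ into two bi-infinite paths, or into a bi-infinite path and a finite cycle, according to the cyclic order of $a,b,c,d$ along $P$. The idea is to use stationarity together with the \emph{independence} of $F^+$ and $F^-$ to realize such a switch as a measure-preserving (or at least absolutely continuous) operation at a positive-density set of well-chosen locations, thereby transforming the a.s.\ configuration ``exactly one infinite path'' into one carrying an even number of infinite paths but the same law --- contradicting the invariance of $\npath$, equivalently the conclusion $\pi=1$ above.

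The main obstacle is precisely the admissibility of this surgery. Unlike Bernoulli percolation, a general stationary ergodic matching possesses \emph{no} insertion-tolerance or finite-energy property, so there is no a priori reason that conditioning on a switch keeps the law in the same measure class; making the surgery rigorous is the crux, and is exactly what keeps the statement at the level of a conjecture. Two further difficulties feed into this. First, both the reduction to crossing paths and the parity computation require that each gap be crossed only finitely often, whereas a matching with non-integrable edge length (and every invariant matching of $\Z$ has infinite expected edge length) can produce paths whose ends \emph{oscillate}, crossing some gap infinitely often; these must be excluded or treated separately. Second, dropping planarity removes the height-function and non-crossing-arch structure that powered both the trifurcation argument of Section~\ref{sec:trifu} and the rewiring of Section~\ref{sec:rewire}. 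I therefore expect that a complete proof in full generality will require either an indistinguishability-type theorem tailored to the superposition of two independent matchings, or an additional moment/tolerance hypothesis under which the surgery above can be carried out.
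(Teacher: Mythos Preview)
The statement you address is presented in the paper as an open \emph{conjecture}; the authors offer no proof, so there is nothing of theirs to compare your attempt against. Your own closing paragraph already concedes that the argument is incomplete and identifies the absence of any finite-energy property as the central obstruction --- this is consistent with the paper's stance.

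Two points in your sketch deserve sharper scrutiny, however, because they fail more badly than you indicate. First, the parity invariant $\pi=(A_0+B_0)\bmod 2$ is typically \emph{undefined}: in the paper's own Bernoulli-parenthesis model the upper arcs straddling any fixed gap are nested with no outermost member, so $A_0=B_0=\infty$ almost surely. Your later caveat that the parity computation needs finitely many crossings per gap is therefore not a side condition but one that excludes precisely the motivating example; the ``genuine $\pmod 2$ invariant'' never gets off the ground there. (Relatedly, the parenthetical claim that every invariant matching of $\Z$ has infinite expected edge length is false: the random translate by $0$ or $1$ of the matching $\{2n\leftrightarrow 2n+1:n\in\Z\}$ is stationary, ergodic, and has all edges of length~$1$.) Second, the switch surgery is not merely hard to justify but can be \emph{impossible}: for the same nearest-neighbour $F^+$ just described, swapping any two $F^+$-edges produces a configuration outside the support of the law of $F^+$, so no absolutely continuous local modification exists. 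These are genuine obstructions rather than technicalities, and they confirm --- rather than resolve --- the conjectural status of the statement.
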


\section{Finite configurations}
\label{sub:finite_config} Let $\Omega=(\{-1,+1\}^2)^\Z$ be our set of configurations, and let us consider a random variable $\omega=(\omega^+,\omega^-)$ in $\Omega$, where $\omega^+=(\omega^+_n)_{n\in\Z}$ and $\omega^-=(\omega^-_n)_{n\in\Z}$ are independent sequences of i.i.d.~random variables with uniform distribution in $\{-1,+1\}$. We denote by $E^{+}$ (resp.~$E^{-}$) the edges belonging to the upper (resp.~lower) planar perfect matching. For $x \in \mathbb{Z}$ we write  $ \mathcal{C}(x)$ for the cluster of $G$ containing $x$.  
Knowing the restriction of the configuration $\omega$ to a finite interval amounts to knowing a finite sub-graph of $G$ plus the orientation of edges exiting the inverval (i.e., the fact that they leave the interval toward the left or right). Let us introduce some notation and properties regarding this situation, which will be useful in order to make ``local'' modifications of $\omega$. Note indeed that changing $\omega$ in such a finite interval may have global consequences on $G$, unless the location of the out-going edges is preserved. 
Thanks to translation invariance, we consider here integer intervals of the form $\Li1,N\Ri$, where $N\in\N$, without loss of generality. 

\newcommand{\ntot}{n_{\rm tot}}
For $S\subset\Z$, we shall denote the set of configurations on $S$ by
\begin{equation}\label{def_omegaS}
\Omega_S=(\{-1,+1\}^2)^S.
\end{equation}
Let $N\in\N$, and $\eta\in\Omega_{\Li1,N\Ri}$. Let us complete $\eta$ into $\omega\in\Omega$ by letting $\omega_n=(+1,+1)$ for all $n<1$ and $\omega_n=(-1,-1)$ for all $n>N$; we may then define the graph $G=G(\omega)$ as before, and define the numbers of edges outgoing $\Li1,N\Ri$ through the top left and top right (see Figure~\ref{fig:ends_circle_in}):
\begin{align*}
n^+_L = n^+_L(\eta) & =\#\{x\in\Li1,N\Ri\st \exists y<1,\ \{x,y\}\in E^+\},\\
n^+_R = n^+_R(\eta) & =\#\{x\in\Li1,N\Ri\st \exists y>N,\ \{x,y\}\in E^+\},
\end{align*}
and similarly $n^-_L$ and $n^-_R$ using $E^-$, for the lower part, and the total number of boundary edges:
\[\ntot=\ntot(\eta)=n^+_L+n^+_R+n^-_L+n^-_R.\] 
We shall sometimes call these edges ``dangling ends'', or ends, of $\Li1,N\Ri$. These edges are non-crossing and all of them start inside and end outside the circle of diameter $[\frac12,N+\frac12]$ (for any disjoint embedding). Thus, they have a natural cyclic ordering, given by the order on this circle of their last intersection points (or of their \emph{only} intersection, for the embeddings mentioned in the beginning). Let us number them from $1$ to $\ntot$ starting for instance from the bottommost top left outgoing edge and following the clockwise order. By associating the two ends of each connected component in $G$ to each other, $\eta$ defines a non-crossing matching $\sigma$ of $\Li1,\ntot\Ri$; we shall say that $\eta$ \emph{realises} $\sigma$.  
\begin{figure}
\includegraphics[height=5cm]{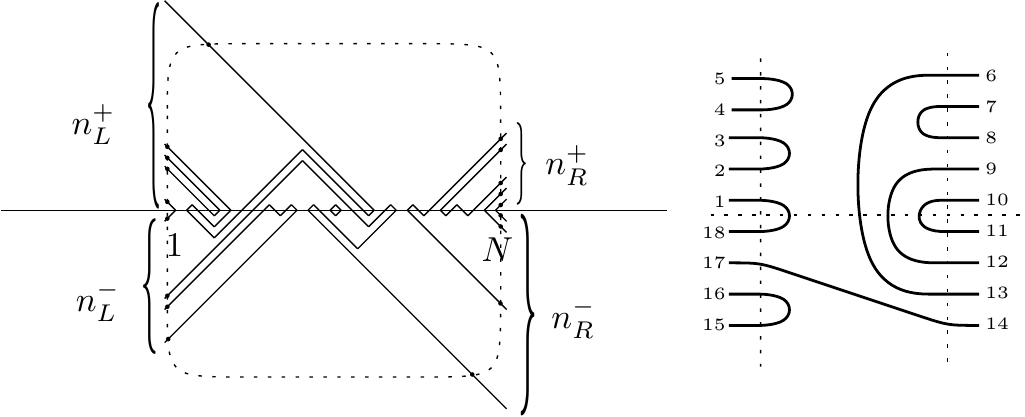}
\caption{Left: Notations for numbers of ends of paths going out of $\Li1,N\Ri$. These ends have a natural cyclic ordering viewed as points of the dotted loop. Right: Non-crossing matching realised by the left configuration.}
\label{fig:ends_circle_in}
\end{figure}

Conversely, any non-crossing matching can be realised with any prescribed numbers of ends provided some room is allowed and the necessary parity conditions hold: 

\begin{lemma}\label{lem:local_config}
Let $N\ge1$ be an integer. For any nonnegative integers $a^+,b^+,a^-,b^-$ such that $a^++b^+$, $a^-+b^-$ and $N$ have same parity, and such that $\ntot\defeq a^++b^++a^-+b^-\le N$, and for any non-crossing matching $\sigma:\Li1,\ntot\Ri\to\Li1,\ntot\Ri$, there exists a configuration $\eta\in\Omega_{\Li 1,N\Ri}$ such that $(n^+_L,n^+_R,n^-_L,n^-_R)(\eta)=(a^+,b^+,a^-,b^-)$ and that realises $\sigma$. 
\end{lemma}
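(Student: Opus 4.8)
The plan is to construct $\eta$ explicitly, processing the dangling ends in their cyclic order and "spending" the interior sites of $\Li1,N\Ri$ to create the prescribed pairings. First I would reduce to the extremal case $\ntot = N$ (so that every site of $\Li 1,N\Ri$ carries a dangling end), by noting that if $\ntot < N$ we can first produce, on a sub-interval of length $\ntot$ (or $\ntot$ plus a small even buffer if parities of $\ntot$ and $N$ differ on that sub-interval — but they agree by hypothesis, since $a^++b^+$, $a^-+b^-$, $N$ share a parity), a configuration realising $\sigma$ with exactly the prescribed boundary data, and then fill the remaining $N-\ntot$ sites with trivial local structure that creates only finite clusters and no new outgoing edges. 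Concretely, on a block of two consecutive sites $n,n+1$ one may set $\omega^+_n=(+1)$-opening, $\omega^+_{n+1}=(-1)$-closing and likewise for $\omega^-$, producing a bounded $2$-cycle that does not reach the boundary; since $N-\ntot$ is even this uses up the slack. So the crux is the case $\ntot=N$.

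In that case I would build the two planar matchings $E^+$, $E^-$ separately, since they are independent coordinates of $\eta$. Think of the $n^+_L$ left-outgoing top edges as "already-open parentheses" sitting to the left of site $1$, and the $n^+_R$ right-outgoing top edges as "parentheses that must be left open past site $N$"; the remaining $N - n^+_L - n^+_R$ top slots must be matched internally in a non-crossing way. The parity condition $a^++b^+ \equiv N$ guarantees $N - n^+_L - n^+_R$ is even, so such an internal non-crossing matching exists. The delicate point is that the assignment of the $\ntot$ dangling ends to the cyclic positions $1,\dots,\ntot$ is forced by the geometry (bottommost-top-left first, then clockwise), and we must realise a prescribed non-crossing matching $\sigma$ of these positions, not just any matching. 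So the construction of $E^+$ and $E^-$ cannot be done independently of each other: which interior top-arcs and bottom-arcs we draw, and in which order along $\Li1,N\Ri$, determines how the components thread through the interval and hence which cyclic positions get paired.

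The key step is therefore a combinatorial lemma: given a non-crossing matching $\sigma$ of $\Li1,\ntot\Ri$ and the specified block sizes $(a^+,b^+,a^-,b^-)$ of the four arcs of "boundary" positions (top-left, top-right, bottom-right, bottom-left, read clockwise), one can choose interior top- and bottom-matchings on $\Li1,N\Ri$ whose induced path structure realises $\sigma$. I would prove this by induction on $\ntot$: locate a pair $\{i,i+1\}$ of cyclically adjacent positions matched by $\sigma$ (a non-crossing matching always has such a "nearest neighbour" pair); these two ends can be joined by a single arc (or a short path through one or two fresh interior sites, chosen top or bottom according to whether $\{i,i+1\}$ lies among the top or bottom boundary positions, with the site count adjusted by the parity bookkeeping to keep everything consistent), removing this pair and reducing to a non-crossing matching of $\Li1,\ntot-2\Ri$ with updated block sizes on a shorter interval. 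The base case $\ntot\le 2$ is immediate. I expect the main obstacle to be the bookkeeping at the "corners" — when the adjacent pair $\{i,i+1\}$ straddles the junction between the top-left block and the top-right block, or between a top block and a bottom block — because there the joining arc must cross from one side of the interval to the other, which forces a particular local configuration at site $1$ or site $N$ and consumes the two extremal boundary ends simultaneously; keeping the parities of the four counts and of $N$ mutually consistent through each reduction step, and verifying that the arcs one draws are genuinely non-crossing and compatible with completing $\eta$ by all-$(+1,+1)$ on the left and all-$(-1,-1)$ on the right, is where the care is needed.
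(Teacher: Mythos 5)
Your reduction to the case $\ntot=N$ rests on a false parity statement. Since $a^++b^+$ and $a^-+b^-$ have the same parity, $\ntot=(a^++b^+)+(a^-+b^-)$ is \emph{always even} (as it must be for a perfect matching $\sigma$ of $\Li1,\ntot\Ri$ to exist), whereas $N$ may be odd: take $N=3$ and $(a^+,b^+,a^-,b^-)=(1,0,1,0)$. Then $N-\ntot$ is odd, so the leftover sites cannot be exhausted by two-site loops, an ``even buffer'' cannot cure an odd discrepancy, and the extremal case $\ntot=N$ you reduce to simply does not exist for odd $N$. The way out is to observe that when $a^++b^+$ is odd, $\sigma$ must match at least one top end to a bottom end, and such a pair can be realised through a \emph{single} vertex (a diagonal), so that the number of vertices consumed in realising $\sigma$ can be arranged to have the parity of $a^++b^+\equiv N$, leaving an even remainder for the filler loops. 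This accounting is exactly what the paper's construction provides: every top-to-bottom connection costs one vertex and every top-to-top or bottom-to-bottom connection costs two, so the used vertex count automatically has the parity of $n^+_L+n^+_R$.

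The inductive peeling of a cyclically adjacent matched pair is a reasonable idea, but the step you defer (the ``corner'' cases) is precisely the nontrivial content, and as described it would fail: an adjacent pair need not be realisable by ``one or two fresh interior sites'' placed next to each other. If $\sigma$ matches the outermost top-left end with the outermost top-right end, the connecting path must use one vertex at the extreme left and one at the extreme right of the remaining interval, joined by a bottom arc running under everything else; its two sites are far apart and their placement constrains all remaining ends, and analogous global constraints occur for top-to-bottom pairs crossing the left or right side. You would also still need to check that, after all peels, the unused sites form blocks of even length so the short loops fit. The paper sidesteps the induction with a direct global construction: left-to-right paths are realised as nested $\land$ and $\lor$ shapes and as diagonals, and same-side paths are inserted between them in order of containment as two-vertex strips or one-vertex turns; non-crossing and the correct cyclic order then come for free from the nesting, with at most $\ntot$ vertices used and the right parity. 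Making your induction rigorous essentially amounts to reproducing that construction in the corner cases, so as it stands the proposal has a genuine gap.
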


Note that the parity assumptions are necessary conditions for the conclusion to hold: since the $N$ vertices, together with the $n^+_L(\eta)+n^+_R(\eta)$ upper ends are paired by $\eta^+$, their total number $N+n^+_L(\eta)+n^+_R(\eta)$ has to be even, and similarly for the lower ends. 

\begin{proof}
Let us first construct  a configuration that realises $\sigma$ using at most two vertices for each matched pair, disregarding the value of $N$. We will now describe the construction verbally, but the reader is probably better served by simply checking Figure~\ref{fig:exemple_matching}. We realise every horizontal path (i.e.\ one that starts on the left and ends on the right) with both ends above the line by a $\land$ shape, every horizontal path with both ends below the line by a $\lor$ shape, and horizontal below-to-above and above-to-below (only one kind may exist, by planarity), by diagonals. Each 'V' shape uses two vertices, and each diagonal one. Paths starting and ending on the same side (left or right) are then inserted between these, in order of containment, with above-to-above and below-to-below inserted as diagonal strips (each using two vertices) and above-to-below as a $>$ or $<$ (each using one vertex).

\begin{figure}
\includegraphics[height=6cm]{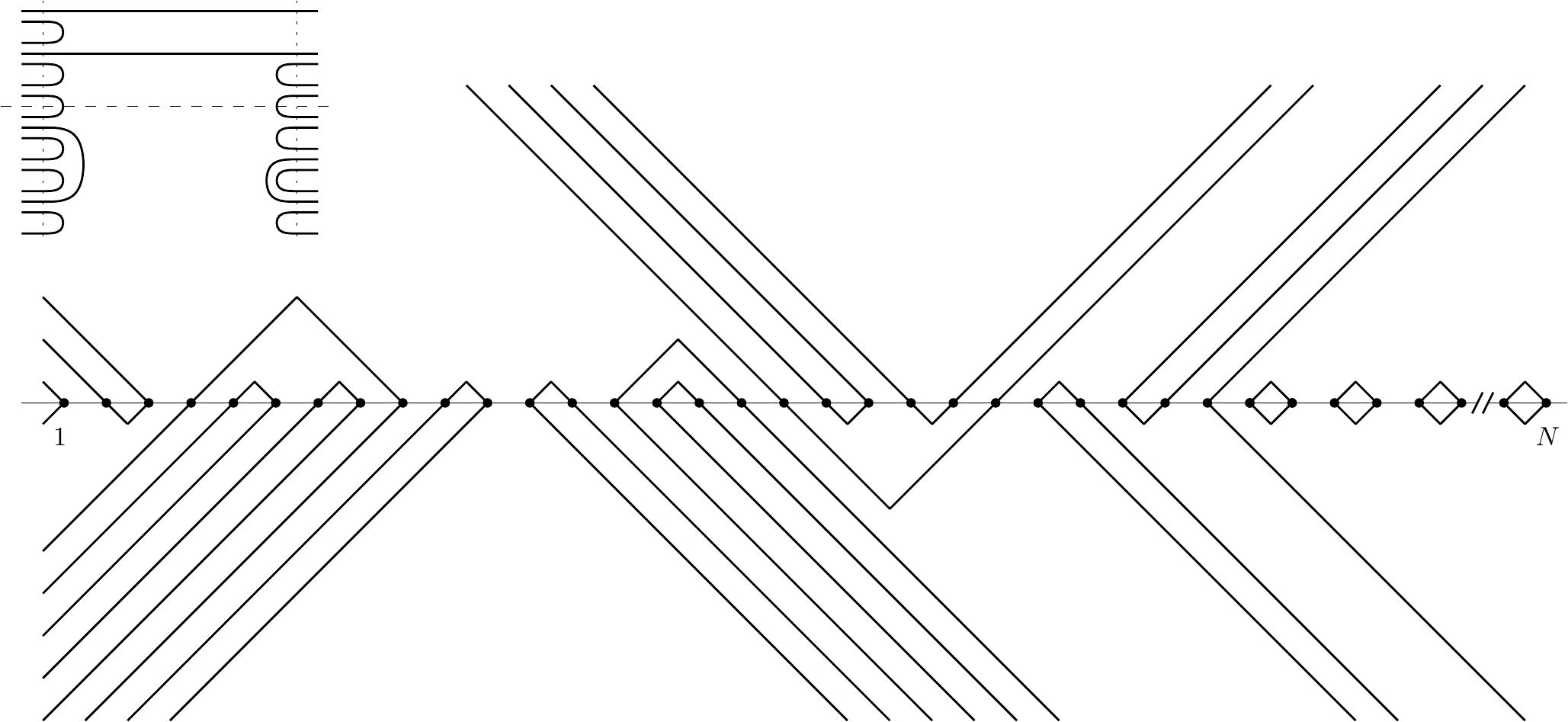}
\caption{Example of construction of local configuration realising a given matching (top left scheme), see Lemma~\ref{lem:local_config}. Here, $N$ has to be even and $\ge28$.}
\label{fig:exemple_matching}
\end{figure}

This construction uses a maximum total of $\ntot\le N$ vertices. Thus, this construction can be fitted inside $\Li1,N\Ri$ and leaves one free interval provided we used vertices that are next to each other. Furthermore, the number of used vertices is easily seen to have same parity as $n^+_L(\eta)+n^+_R(\eta)$, hence the remaining number of vertices has to be even due to the assumptions, which enables making a series of short loops $((+1,+1),(-1,-1))$ to complete the configuration in $\Li1,N\Ri$. 
\end{proof}

\begin{lemma}\label{lem:size_boundary}
Almost surely, the number of ends going out of $\Li1,N\Ri$ is negligible compared to $N$:
\begin{equation}\label{eqn:size_boundary}
\frac{\ntot(\omega_{|\Li1,N\Ri})}N\limites{}{N\to\infty}0\qquad \text{a.s.}
\end{equation}
\end{lemma}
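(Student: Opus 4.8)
The plan is to rewrite the four boundary counts $n^+_L,n^+_R,n^-_L,n^-_R$ as explicit functions of two independent simple random walks, and then invoke the strong law of large numbers. Fix $N$, put $\eta=\omega_{|\Li1,N\Ri}$ and complete it into $\omega$ as in the text. I would read $\omega^+$ as a word of parentheses ($+1$ an opening bracket, $-1$ a closing one). Since every position $<1$ carries an opening bracket and every position $>N$ a closing one, the standard stack reduction of parenthesis words shows that, after cancelling all nested matched pairs inside $\omega^+_1\cdots\omega^+_N$, one is left with a word consisting of $a$ closing brackets followed by $b$ opening brackets; the cancelled pairs stay matched by $E^+$ inside $\Li1,N\Ri$, the $a$ leftover closing brackets are matched to opening brackets sitting at positions $\le 0$, and the $b$ leftover opening brackets are matched to closing brackets sitting at positions $>N$. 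Consequently $n^+_L(\eta)=a$ and $n^+_R(\eta)=b$, and the same identities hold for $\omega^-$ because the completion is identical on the two coordinates.

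Next I would introduce the walks $S^\pm_m=\omega^\pm_1+\cdots+\omega^\pm_m$ with $S^\pm_0=0$. For a finite parenthesis word, the number of unmatched closing brackets equals $-\min_{0\le m\le N}S^+_m$ and the number of unmatched opening brackets equals $S^+_N-\min_{0\le m\le N}S^+_m$; hence
\[n^+_L+n^+_R=S^+_N-2\min_{0\le m\le N}S^+_m\le 3\max_{0\le m\le N}|S^+_m|,\]
and likewise for the lower coordinate, so that
\[\ntot(\omega_{|\Li1,N\Ri})\le 3\max_{0\le m\le N}|S^+_m|+3\max_{0\le m\le N}|S^-_m|.\]
Now $S^+$ and $S^-$ are simple symmetric random walks, so $S^\pm_m/m\to0$ a.s.; this forces $\frac1N\max_{0\le m\le N}|S^\pm_m|\to0$ a.s.\ (given $\eps>0$ one has $|S^\pm_m|\le\eps m$ for all large $m$ a.s., and the finitely many remaining terms are absorbed once $N$ is large enough). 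Dividing the last display by $N$ and letting $N\to\infty$ gives \eqref{eqn:size_boundary}; alternatively one may quote the law of the iterated logarithm, $\max_{0\le m\le N}|S^\pm_m|=O(\sqrt{N\log\log N})$ a.s., which is far more than enough.

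There is no serious obstacle here. The one point that deserves a careful word is the reduction step: that framing $\eta$ by infinitely many opening brackets on the left and closing brackets on the right pushes precisely the unmatched interior brackets out through the two ends and rewires none of the others. This is the uniqueness of the normal form of a finite well-parenthesized word, and it is exactly what turns the purely combinatorial quantity $\ntot$ into a function of the two walks; after that, the statement is only the law of large numbers.
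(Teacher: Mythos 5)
Your proof is correct and takes essentially the same route as the paper: it expresses the boundary counts $n^+_L,n^+_R,n^-_L,n^-_R$ as functionals of the two independent simple random walks $S^\pm$ and concludes by the law of large numbers. Your exact identity $n^+_L+n^+_R=S^+_N-2\min_{0\le m\le N}S^+_m$ is in fact a more careful version of the formula stated in the paper (which writes the range $\max_k S^+_k-\min_k S^+_k$), and since both quantities are bounded by a constant times $\max_{0\le m\le N}|S^\pm_m|$, the conclusion is identical.
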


\begin{proof} If for $k \geq 1$ we introduce $S_{k}^{+} = \omega^{+}_{1} + \cdots + \omega^{-}_{k}$ and similarly for $S^{-}$ then $S^{+}$ and $S^{-}$ are independent simple random walks and we have 
\[\ntot(\omega_{|\Li1,N\Ri})=\big(
\max_{1\le k\le N}S^+_k-\min_{1\le k\le N} S^+_k\big)+\big(
\max_{1\le k\le N}S^-_k-\min_{1\le k\le N} S^-_k)  \]
hence the conclusion comes from the law of large numbers.
\end{proof}

Finally, the following classical lemma controls probabilities after a local modification $\varphi$. 

\begin{lemma}[Finite energy property]\label{lem:finite_energy}
Let $S$ be a finite subset of $\Z$, and $\varphi$ be a mapping $\Omega_{S^c}\to\Omega_S$ (recall the notation from (\ref{def_omegaS})). For any event $C$, define
$\Ct=\{\omegat\st \omega\in C\}$,
where, for $\omega\in\Omega$, 
\[\omegat=\varphi(\omega_{|S^c})\indic_S+\omega_{|S^c}\indic_{S^c}.\] 
Then $\prob(\Ct)\ge 4^{-|S|}\prob(C)$. 
\end{lemma}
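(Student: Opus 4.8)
The plan is a soft conditioning argument; no combinatorics of the model enters, only the fact that under $\prob$ the restriction $\omega_{|S}$ is uniform on the finite set $\Omega_S$, which has $4^{|S|}$ elements, and is independent of $\omega_{|S^c}$.

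The first and main step is to rewrite $\Ct$ as the intersection of two events of completely different flavour. The key remark is that $\omegat$ depends on $\omega$ only through $\omega_{|S^c}$. Writing $\pi$ for the restriction map $\omega\mapsto\omega_{|S^c}$ and $\pi(C)=\{\omega_{|S^c}\st\omega\in C\}\subseteq\Omega_{S^c}$, I would introduce
\[
B=\{\omega\st\omega_{|S}=\varphi(\omega_{|S^c})\},\qquad C'=\pi^{-1}(\pi(C))=\{\omega\st\omega_{|S^c}\in\pi(C)\},
\]
so that $C'$ is $\sigma(\omega_{|S^c})$-measurable and contains $C$, and then check that $\Ct=B\cap C'$. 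The inclusion $\Ct\subseteq B\cap C'$ is immediate, since any configuration of the form $\omegat$ satisfies $\omegat_{|S}=\varphi(\omegat_{|S^c})$ and has $\omegat_{|S^c}=\omega_{|S^c}\in\pi(C)$. For the reverse inclusion, given $\zeta\in B\cap C'$ one picks $\omega\in C$ with $\omega_{|S^c}=\zeta_{|S^c}$; then $\omegat$ coincides with $\zeta$ on $S^c$ and, on $S$, equals $\varphi(\omega_{|S^c})=\varphi(\zeta_{|S^c})=\zeta_{|S}$ because $\zeta\in B$, whence $\zeta=\omegat\in\Ct$.

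The second step is then immediate. Conditioning on $\omega_{|S^c}$, the value $\varphi(\omega_{|S^c})$ is determined while $\omega_{|S}$ remains uniform on $\Omega_S$, so $\prob\big(B\mid\sigma(\omega_{|S^c})\big)=4^{-|S|}$ almost surely; in particular $B$ is independent of the $\sigma(\omega_{|S^c})$-event $C'$. Therefore
\[
\prob(\Ct)=\prob(B\cap C')=4^{-|S|}\prob(C')\ge 4^{-|S|}\prob(C),
\]
the last inequality using $C\subseteq C'$.

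The only genuinely delicate point — the familiar one in finite-energy arguments — is measurability: the projection $\pi(C)$ of a Borel set need not be Borel. I expect this to be the sole obstacle, and it is a soft one: $\pi(C)$ is analytic, hence universally measurable, so $\prob(C')$ is well defined and the displayed computation is valid as written; alternatively, replacing $\pi(C)$ by a measurable kernel from below produces a genuinely Borel subset of $\Ct$ of probability at least $4^{-|S|}\prob(C)$, which is all that the applications of the lemma require.
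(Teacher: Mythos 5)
Your argument is correct and is essentially the paper's own proof: your $C'=\pi^{-1}(\pi(C))$ is exactly the event $\Cc=\{\omega\st\exists\eta\in\Omega_S,\ \eta\indic_S+\omega_{|S^c}\indic_{S^c}\in C\}$ used there, and your identity $\Ct=B\cap C'$ together with the independence/uniformity of $\omega_{|S}$ is precisely the computation $\prob(\Ct)=\prob(\Cc\cap\{\omega_{|S}=\varphi(\omega_{|S^c})\})=4^{-|S|}\prob(\Cc)\ge4^{-|S|}\prob(C)$. The measurability caveat you raise is not actually needed: since $\Omega_S$ is finite, $\pi(C)$ (equivalently $C'$) is a finite union of preimages of $C$ under the measurable section maps $\zeta\mapsto\eta\indic_S+\zeta\indic_{S^c}$, hence Borel, so no appeal to analytic sets is required.
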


\begin{proof}
Note indeed that $C\subset\Cc$ where $\Cc=\{\omega\in\Omega\st \exists\eta\in\Omega_S,\ \eta\indic_S+\omega_{|S^c}\indic_{S^c}\in C\}$ is independent of $\omega_{|S}$, and $\omega_{|S}$ is uniform in $\Omega_S$, hence
$\prob(\Ct)=\prob(\Cc\cap\{\omega_{|S}=\varphi(\omega_{|S^c})\})=4^{-|S|}\prob(\Cc)\ge4^{-|S|}\prob(C)$.
\end{proof}

\section{Trifurcations}\label{sec:trifu}

In this section we prove that $\npath\le 2$ almost surely. The proof adapts Burton and Keane's classical argument based on an appropriate notion of multifurcation.  Let us first, for $x\in\Z$, consider the first infinite cluster straddling over $x$, if it exists:
\[\Cr^+(x)=\Cr(x'),\quad x'=\min\{y>x\st \exists z<x, \{y,z\}\in E^+,\,|\Cr(y)|=\infty\},\] 
and $\Cr^+(x)=\emptyset$ if there is no such cluster (it actually exists a.s.\ but we don't need this fact), and define similarly $\Cr^-(x)$ below $x$, using $E^-$.

In the following definition $d(x,A)$ is the distance in $ \mathbb{Z}$ of the point $x$ to a set $A \subset \mathbb{Z}$.
\begin{definition}A point $x\in\Z$ is called a \textbf{trifurcation point} if it belongs to the following set : 
\begin{align*}
 \Tri = \{x\in\Z \st 
 & (\omega^+_x,\omega^-_x)\in\{(-1,-1),(+1,+1)\},\,|\Cr(x)|=|\Cr^+(x)|=|\Cr^-(x)|=\infty,\\
 	& d(x;\Cr^+(x))\le3,\,d(x;\Cr^-(x))\le3,\\ 
	& \text{and } \Cr(x), \Cr^+(x), \Cr^-(x)\text{ are all different}\}
\end{align*}
\end{definition}
Notice that our definition is a bit more subtle than asking that $3$ infinite clusters come nearby $x$. Roughly speaking we do not want that one cluster separates the two-others. The need for our definition should become clearer in the proof of the following lemma.

\begin{lemma}\label{lem:trifurcation_point} For any $N$,
\[
\#(\Tri\cap\Li 1,N\Ri)\le 2+ \#\{\Cr:|\Cr|=\infty,\;\Cr\cap\Li 1,N\Ri\ne\emptyset\}.
\]
where $\Cr$ denotes a path of our graph $G$.

\end{lemma}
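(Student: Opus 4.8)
The plan is to prove this Burton--Keane-type inequality by the standard combinatorial device: to each trifurcation point in $\Li1,N\Ri$ we associate, in an injective or almost-injective manner, a ``corner'' at the boundary of a tree-like structure, and then bound the number of such corners in terms of the number of infinite clusters meeting the interval. The crucial point, as the authors warn just before the statement, is that the natural object to which Burton--Keane applies is not the infinite paths themselves but the \emph{complementary regions} between them in the plane; planarity is what gives us the tree structure to exploit.

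\medskip

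Concretely, here is the sequence of steps I would carry out. First, fix a planar (disjoint) embedding of the finite subgraph $G(\omega_{|\Li1,N\Ri})$ together with its dangling ends, as in Lemma~\ref{lem:local_config} and Figure~\ref{fig:ends_circle_in}: all vertices on a segment, top arcs above, bottom arcs below, and the ends crossing the bounding circle of diameter $[\tfrac12,N+\tfrac12]$ in a well-defined cyclic order. Second, observe that each trifurcation point $x\in\Li1,N\Ri$ satisfies $(\omega^+_x,\omega^-_x)\in\{(+1,+1),(-1,-1)\}$, so locally near $x$ both a top arc and a bottom arc emanate on the same side (right if $(+1,+1)$, left if $(-1,-1)$); combined with the conditions $d(x;\Cr^\pm(x))\le3$ and the hypothesis that $\Cr(x),\Cr^+(x),\Cr^-(x)$ are three distinct infinite clusters, this forces three distinct infinite paths to come within a bounded distance of $x$ on three ``sides'' without one of them separating the other two. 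That non-separation condition is exactly what ensures that removing a small neighbourhood of $x$ (inside the picture) splits the region around $x$ into (at least) three pieces, each of which reaches out to a \emph{distinct} infinite cluster; this is the step where the subtlety of the definition of $\Tri$ is used, and where I expect to have to argue carefully using planarity that ``distinct infinite clusters'' really does yield three separate regions rather than, say, two regions bordered by the same path twice.

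\medskip

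Third, with the trifurcation points playing the role of branch points, I would run the usual encoding argument. Consider the graph (or rather the planar picture) restricted to $\Li1,N\Ri$: the infinite clusters that meet $\Li1,N\Ri$ reach the boundary circle through their dangling ends, and a trifurcation point is a vertex of degree $\ge 3$ in the ``abstract tree'' obtained by contracting each infinite cluster to an arc and recording how the trifurcation points branch between them. In any finite forest, the number of vertices of degree $\ge 3$ is at most the number of leaves minus the number of trees, hence in particular is at most $(\text{number of leaves}) - 1$ if there is at least one tree; and here the leaves correspond to the infinite clusters meeting the interval, i.e.\ to $\#\{\Cr : |\Cr|=\infty,\ \Cr\cap\Li1,N\Ri\ne\emptyset\}$. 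The $+2$ slack in the stated inequality absorbs the various off-by-small-constant issues: the fact that the two ``outermost'' regions (the unbounded one above and the one below, or equivalently the region outside the bounding circle which is connected) should be counted with care, the fact that a single infinite cluster can cross the boundary circle several times, and the fact that distinct infinite clusters might be linked outside $\Li1,N\Ri$. I would make this precise by building an explicit auxiliary tree whose leaves inject into $\{\Cr:|\Cr|=\infty,\ \Cr\cap\Li1,N\Ri\ne\emptyset\}$ and whose internal vertices of degree $\ge3$ surject onto $\Tri\cap\Li1,N\Ri$, then invoking $\#\{\text{internal deg}\ge3\}\le\#\{\text{leaves}\}-2$ for a tree with $\ge2$ leaves (or note the statement is trivial when there are fewer).

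\medskip

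The main obstacle, I expect, is not the tree-counting bookkeeping but step two: verifying that the conditions in the definition of $\Tri$ genuinely produce, for each trifurcation point, three regions that are ``charged'' to three \emph{different} infinite clusters, in a way that is consistent across all trifurcation points so that the assignment is monotone/injective enough to survive the counting. In the classical Burton--Keane setting one uses that removing a trifurcation vertex disconnects its cluster into three infinite pieces; here the analogue must be phrased for the planar complement, and one has to rule out the degenerate scenario in which, say, the region that ``should'' lead to $\Cr^+(x)$ actually wraps around and is the same region that leads to $\Cr(x)$. The hypotheses $|\Cr(x)|=|\Cr^+(x)|=|\Cr^-(x)|=\infty$ together with all three being distinct, plus the local $(\pm1,\pm1)$ condition that keeps the top and bottom arcs on the same side, are precisely tailored to preclude this; making that argument rigorous via a careful planar-separation lemma is where the real work lies, and it is presumably what the authors allude to when they say ``the need for our definition should become clearer in the proof.''
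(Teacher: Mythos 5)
Your outline points in the right direction (planarity, working with the complementary regions rather than the paths, a tree-type count with the infinite clusters playing the role of the ``few'' objects), and this is indeed the spirit of the paper's argument. But as written it has a genuine gap: the two steps that actually constitute the proof are only announced, not carried out. First, the auxiliary tree is never constructed. You say you would build ``an explicit auxiliary tree whose leaves inject into the set of infinite clusters meeting $\Li1,N\Ri$ and whose internal vertices of degree $\ge3$ surject onto $\Tri\cap\Li1,N\Ri$,'' but you do not say what its vertices and edges are, nor why it is acyclic; the forest leaf-counting inequality then has nothing to apply to, and the claim that ``the $+2$ slack absorbs'' multiple boundary crossings, clusters linked outside the interval, etc., is unsubstantiated. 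The paper resolves this by embedding the bi-infinite paths in $\R^2\cup\{\infty\}$ (so that all of them become closed curves through $\infty$, disjoint elsewhere), taking as vertices the \emph{faces} of this embedding and as edges the infinite paths meeting $\Li1,N\Ri$ (each incident to exactly two faces by Jordan's theorem), and proving tree-ness via planar duality, the dual being a bouquet of loops at one vertex. Note also that the paper does not use your finite-window picture (bounding circle, dangling ends) for this lemma at all; it works directly with the full infinite paths.

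Second, and more importantly, the charging of trifurcation points to the tree --- which you yourself identify as ``where the real work lies'' --- is exactly the content of the lemma and is missing. The paper's mechanism is an induction: for a face $f$ and a collection $\Ur$ of infinite boundary paths of $f$, one shows $|\Ur|\ge|\Tri(\Ur)|+2$. Given a trifurcation $x$, the vertical segment from $x$ up to $\Cr^+(x)$ and down to $\Cr^-(x)$ meets no other infinite path (this is precisely why $\Cr^\pm(x)$ are defined as the \emph{nearest} straddling infinite clusters --- the ``non-separation'' you allude to), so it dissects $f$ into three parts whose boundary collections satisfy $|\Ur^1|+|\Ur^2|+|\Ur^3|=|\Ur|+3$, every other trifurcation counted in $\Tri(\Ur)$ falls into one of the parts, and the induction closes; summing $\deg(f)-1$ over the faces of the tree then yields the stated bound. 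Without some version of this dissection-and-recursion step, your assertion that the assignment of trifurcation points to degree-$\ge3$ vertices is ``injective enough'' is precisely the unproved claim, not an ingredient you may invoke. So the proposal is a plausible plan sharing the paper's strategy, but it is not yet a proof.
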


\begin{proof}
Let us view the paths $\Cr$ as embedded in $\R^2$ and compactify $\R^2$ by adding a point at infinity thus changing the bi-infinite lines into simple closed curves of the topological sphere $\R^2\cup\{\infty\}$ that are disjoint except for the point $\infty$. The proof revolves around an auxiliary planar graph $\Tr$, constructed as follows. The vertices of $\Tr$ are the faces of this embedding i.e.\ regions of $\R^2\cup\{\infty\}$ whose boundary is a union of paths $\Cr$, and two vertices are connected in $\Tr$ if they share a path $\Cr$ (infinite, intersecting $\Li 1,N\Ri$) as a boundary. Jordan's theorem and the fact that the paths intersect only at $\infty$ ensures that every path $\Cr$ is incident to exactly two faces, i.e.\ that $\Tr$ is indeed a graph.

We first note that $\Tr$ is a tree. This follows from the fact that its dual graph, which is a set of loops based on the same vertex, admits a spanning tree with one vertex and zero edges, and from~\cite[Theorem XI.6]{tut84} (the spanning trees of the dual of a planar graph $G$ are the complements of the edge-duals of spanning trees of $G$).

Let now $f$ be some vertex of $\Tr$ and let $\Ur$ be a collection of edges of $\Tr$ incident to $f$, or in other words, infinite paths contained in the boundary of the face $f$ (not necessarily all of them). Denote by $\Tri(\Ur)$ the set of $x\in\Tri$ such that $\{\Cr(x), \Cr^+(x),\Cr^-(x)\}\subset\Ur$. 

\begin{figure}
\includegraphics[height=4cm]{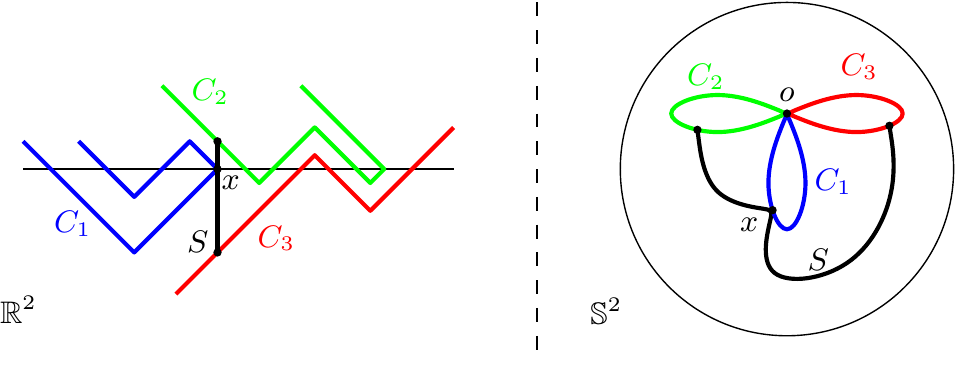}
\caption{Mapping of a trifurcation point into $\S^2$. The vertical segment $S$ meets $\Cr_1,\Cr_2$ and $\Cr_3$ only once. Any curve with that property must meet $S$, and thus have same abscissa $x$ if it is also a vertical segment. Cf.~proof of Lemma~\ref{lem:trifurcation_point}.}
\label{fig:topo_trif}
\end{figure}

We now claim that $|\Ur|\ge\Tri(\Ur)+2$.
We prove this by induction on $|\Ur|$. For $|\Ur|=0,1,2$ there is nothing to prove as a trifurcation requires $|\Ur|\ge 3$. Assume therefore that $|\Ur|\ge 3$ and that $x$ is some trifurcation point that belongs to $\Ur$. Consider the vertical segment $S$ in $\R^2$ passing through $x$ and whose upper and lower extremities are the first intersections with $\Cr^+$ and $\Cr^-$ respectively (here and below we write $\Cr^\pm$ as a short for $\Cr^\pm(x)$). See figure \ref{fig:topo_trif}. By the definition of $\Cr^\pm$, $S$ intersects no other infinite $\Cr$, in particular no other $\Cr\in\Ur$. Thus $S$ dissects the face $f$ into three disjoint parts. For example, one of them has as boundary the part of $\Cr^+$ up to the top of $S$, the part of $S$ in the top half plane, and then the part of $\Cr(x)$ up to $x$ (and possibly some paths in $\Ur\setminus\{\Cr(x),\Cr^+,\Cr^-\}$). Let $\Ur^{1,2,3}$ be the collection of paths of $\Ur$ which compose the boundaries of these three parts. By the construction, only $\Cr(x)$ and $\Cr^\pm$ may belong to more than one of $\Ur^{1,2,3}$ and each belongs to exactly two, so $|\Ur^1|+|\Ur^2|+|\Ur^3|=|\Ur|+3$. Further, each $y\in\Tri(\Ur)$, other than $x$ itself, must belong to one of the $\Tri(\Ur^i)$. Applying the claim inductively to $\Ur^i$ we get
\begin{align*}
  |\Tri(\Ur)|&\le 1+|\Tri(\Ur^1)|+|\Tri(\Ur^2)|+|\Tri(\Ur^3)|\\
    & \le 1+|\Ur^1|-2+|\Ur^2|-2+|\Ur^3|-2=|\Ur|-2
\end{align*}
as needed.

To conclude, let $e$ be the number of edges of $\Tr$, and let $n$ be the number of vertices. Since $\Tr$ is a tree $e=n-1$ and then
\[
e=2e-n+1=1+\sum_{f\in\Tr}(\deg(f)-1)\ge 1+\sum_{f\in\Tr}(\Tri(f)+1)
\]
where $\Tri(f)=\Tri(\Ur)$ for $\Ur$ the collection of all paths forming the boundary of $f$, and the last inequality is from the previous discussion. Since there is at least one $f$, the lemma is proved.
\end{proof}

Let us justify that trifurcation points may indeed occur if there are at least 3 infinite clusters. 

\begin{lemma}\label{lem:positive_trifurcation}
Assume $\npath\ge3$ almost surely. Then $\prob(0\in  \Tri)>0$. 
\end{lemma}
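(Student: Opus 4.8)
The plan is to use the finite energy property (Lemma~\ref{lem:finite_energy}) to show that a trifurcation at $0$ has positive probability by exhibiting a single finite configuration that forces one. Since $\npath\ge 3$ a.s., with positive probability three distinct infinite clusters intersect a large interval, say $\Li 1,N\Ri$, and moreover (using Lemma~\ref{lem:size_boundary} and the fact that $\npath$ is a.s.\ constant) we may choose $N$ large enough that $\ntot(\omega_{|\Li 1,N\Ri})$ is much smaller than $N$ and, on a positive-probability event, at least three of the dangling ends belong to three \emph{different} infinite clusters. First I would make this precise: on an event of positive probability, there is a deterministic $N$ and a deterministic collection of three boundary positions such that the event ``the configuration on $\Li 1,N\Ri$ has few dangling ends, and three specified ends lie on three distinct infinite clusters'' has positive probability.

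Next I would perform a local surgery on $\Li 1,N\Ri$ (shifted to sit around $0$ by translation invariance). The idea: outside $\Li 1,N\Ri$ the clusters and their dangling ends are untouched, so the three infinite clusters $\Cr_1,\Cr_2,\Cr_3$ still enter the interval along three prescribed boundary edges in the prescribed cyclic positions. Inside, I want to rewire so that these three ends get joined at a single vertex $x_0$ near the center in the manner required by the definition of $\Tri$: I want $(\omega^+_{x_0},\omega^-_{x_0})\in\{(+1,+1),(-1,-1)\}$, I want $\Cr(x_0)$, $\Cr^+(x_0)$, $\Cr^-(x_0)$ to be the three distinct infinite clusters, and I want each to come within distance $3$ of $x_0$ in the sense of the definition, with none separating the other two. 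Concretely, route $\Cr_1$ to reach $x_0$ directly, route $\Cr_2$ to pass just above $x_0$ using an $E^+$ arc over $x_0$, route $\Cr_3$ to pass just below using an $E^-$ arc under $x_0$, and absorb all other dangling ends and leftover vertices into short local loops. This is exactly the type of construction licensed by Lemma~\ref{lem:local_config}: the non-crossing matching that the new interior must realise on its $\ntot$ dangling ends is the one that pairs the three special ends into the same component as $x_0$'s neighbors and pairs the rest among themselves as before (any fixed non-crossing completion), and the parity and $\ntot\le N$ conditions hold once $N$ is large and we adjust by loops.

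Having built this modification $\varphi:\Omega_{S^c}\to\Omega_S$ with $S=\Li 1,N\Ri$ (translated), Lemma~\ref{lem:finite_energy} gives $\prob(\widetilde C)\ge 4^{-N}\prob(C)>0$, where $C$ is the positive-probability event above and $\widetilde C$ is its image under the modification. On $\widetilde C$, by construction, the translate of $x_0$ is a trifurcation point, so $\prob(x_0\in\Tri)>0$, and by translation invariance $\prob(0\in\Tri)>0$. The one subtlety to handle carefully is that a local modification changes finitely many vertices, so it cannot change $\npath$; hence the three infinite clusters $\Cr_1,\Cr_2,\Cr_3$ remain infinite and remain distinct after surgery precisely because outside $S$ nothing changed and inside $S$ we did not merge them (we only brought them near $x_0$). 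One must verify that the interior rewiring does not accidentally connect two of them to each other: this is where the non-crossing structure and the explicit routing matter, and it is the main obstacle.

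The hard part will be bookkeeping the cyclic order of dangling ends and checking that the desired ``bring three clusters adjacent to $x_0$ without merging and without one separating the others'' configuration is actually realisable as a non-crossing interior matching with the prescribed boundary data — in particular that the three special ends occupy cyclic positions compatible with one going to $x_0$, one arching over it via $E^+$, and one arching under it via $E^-$. I expect this to reduce to choosing $N$ and the event $C$ so that the three special ends are, say, consecutive among the top-left ends (or can be taken so after conditioning on a further positive-probability event), after which the routing is a routine planar picture; but making sure such an event has positive probability, rather than merely that three distinct clusters cross the interval, is the step that needs genuine care.
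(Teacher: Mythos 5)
Your outline is the same as the paper's: find $M$ (via Lemma~\ref{lem:size_boundary}) so that with positive probability at least three infinite clusters meet the box and the number of dangling ends is small, perform a local surgery that creates a trifurcation near the origin, invoke the finite energy property, and finish by translation invariance. But the substance of the lemma is exactly the step you defer: showing that the surgery is realisable for \emph{whatever} cyclic arrangement of the ends of the semi-infinite paths the outside configuration happens to present. Your proposed reduction --- condition further so that the three special ends are consecutive among the top-left ends --- is not justified and cannot be obtained by a finite energy argument, since it is a constraint on the outside configuration and on the global cluster structure; nothing in the hypotheses guarantees that such an arrangement occurs with positive probability. There is also a second gap you wave at (``adjust by loops''): the construction around $0$ consumes a number of vertices of fixed parity, and one must check that the unsettled vertices on each side of the trifurcation can be filled by length-two loops; this is a genuine constraint, which the paper resolves by allowing the trifurcation to sit at $0$ \emph{or} at $1$ (a shift of the picture by one) before invoking translation invariance.

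The paper's proof avoids your ``hard part'' by never trying to identify which dangling ends belong to which of the three distinct clusters. On the event $A_M$, the outside configuration leaves $2k\ge6$ unmatched ends $i_1<\dots<i_{2k}$ (two per infinite cluster meeting the box), and consecutive unmatched ends are separated by an even number of matched ends. The surgery simply pairs $i_1\leftrightarrow i_2$, $i_3\leftrightarrow i_4$, $i_5\leftrightarrow i_6$ through an explicit picture around $0$, and pairs every other unmatched end with its neighbour; each such pairing of two semi-infinite paths automatically produces an infinite path, so three distinct infinite clusters near $0$ come for free, and the only case analysis concerns how many of the six chosen ends exit on the left versus the right (Figure~\ref{fig:pair_infinite}, up to symmetry) together with a ``bending'' device for top versus bottom ends (Figure~\ref{fig:bending}). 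The even-gap parity fact makes the neighbour pairings and the insertion of finite-path ends consistent with Lemma~\ref{lem:local_config}, and the bound $\ntot\le M/3$ (at most $6$ vertices per pair of infinite ends, $2$ per pair of finite ends) guarantees there is room. If you want to salvage your version --- routing three pre-identified ends of the three distinct clusters to a common vertex $x_0$ --- you would have to carry out an analogous case analysis over all planar-compatible cyclic positions of those three ends, check that the nearest straddling arcs above and below $x_0$ belong to the two clusters other than $\Cr(x_0)$ (so that $\Cr(x_0),\Cr^+(x_0),\Cr^-(x_0)$ are indeed all different and within distance $3$), and do the same parity bookkeeping; as it stands, the proposal asserts rather than proves precisely this.
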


\begin{proof}
Assume $\npath\ge3$ almost surely. For $M\in\N$, let us consider the event $A(=A_M)$ that at least 3 infinite clusters meet $\Li-M,M\Ri$ and that the edge-boundary of $\Li-M,M\Ri$ has size at most $M/3$ in $G$, i.e.~$\ntot=\ntot(\omega |_{\Li-M,M\Ri})\le M/3$. The probability of this event goes to~1 as $M\to\infty$ due to Lemma~\ref{lem:size_boundary}, hence we may choose $M$ such that $P(A)>0$. In order to conclude, let us show that, on $A$, by changing the configuration inside $\Li-M,M\Ri$ appropriately, we can obtain a trifurcation point at~0 or at~1.

Recall that $\omega |_{\Li -M,M \Ri}$ encodes a perfect matching over $\Li 1, \ntot \Ri$, see Figure \ref{fig:ends_circle_in}. Similarly the outside configuration $\omega |_{ \mathbb{Z} \backslash \Li -M,M \Ri}$ encodes on $\Li 1 , \ntot \Ri$ another (planar) matching. However this matching is not anymore perfect since any infinite cluster of $G$ touching $\Li -M,M\Ri$ will be split into exactly two semi-infinite paths 
corresponding to two vertices of $\Li 1, \ntot \Ri$ that are unmatched (the breaking of the infinite path by restriciting to $\Z\setminus\Li -M,M\Ri$ might also create finite paths, but these correspond to pairs in the matching). On the event $A$, we thus have $2k$ points $1 \leq i_{1} < \cdots < i_{2k} \leq \ntot$ with $k \geq 3$ that correspond to the two semi-infinite lines coming for each cluster touching $\Li -M,M \Ri$. We first claim that $i_{2}-i_{1}-1, i_{3}-i_{2}-1, \ldots , i_{2k}-i_{2k-1}+1$ and $\ntot+i_{1}-i_{2k}+1$ are all \emph{even}. Indeed, inbetween two consecutive vertices $i_{\ell}$ and $i_{\ell+1}$ the exterior configuration $\omega |_{ \mathbb{Z} \backslash \Li -M,M\Ri}$ realises a planar perfect matching and so the number of vertices involved must be even.

We will now  modify the inside configuration $\omega |_{ \Li -M,M\Ri}$ in order to induce a perfect matching on $\Li 1, \ntot \Ri$ which pairs $i_{1} \leftrightarrow i_{2}$, $i_{3} \leftrightarrow i_{4}$ and $i_{5}\leftrightarrow i_{6}$ 
and all the other points to their neighbour. We want to do that in such a way that the three infinite clusters $ \mathcal{C}_{1}, \mathcal{C}_{2}$ and $ \mathcal{C}_{3}$ coming from the pairing of $i_{1},i_{2},\ldots, i_{6}$  have a trifurcation point either at $0$ or at $1$.

Depending on the number of ends of infinite paths on the left and right of $\Li-M,M\Ri$, we may reduce (up to horizontal symmetry) to one of the configurations in the top part of Figure~\ref{fig:pair_infinite}, where we only depict six ends of semi-infinite paths. 
\begin{figure}
\includegraphics[height=3cm,page=1]{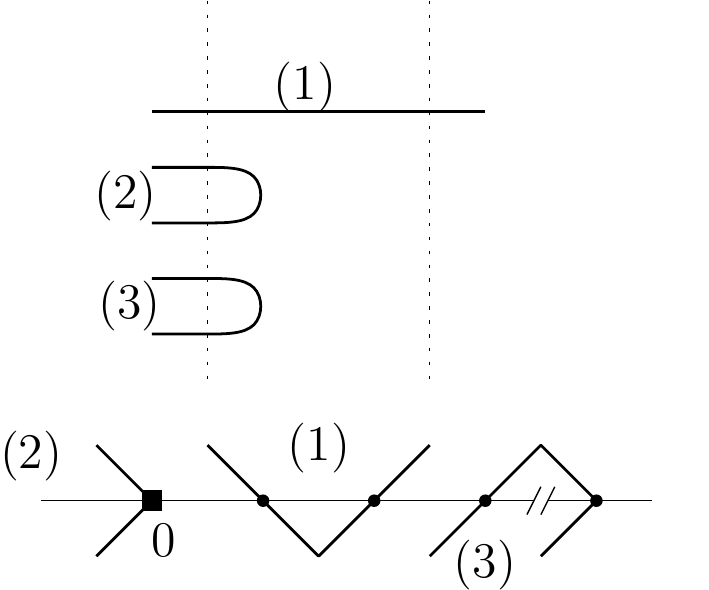}
\includegraphics[height=3cm,page=2]{trif_04.pdf}
\includegraphics[height=3cm,page=3]{trif_04.pdf}
\includegraphics[height=3cm,page=4]{trif_04.pdf}
\caption{Pairing the vertices $i_{1},i_{2}, \ldots, i_{6}$ and corresponding configurations around $0$ such that $0\in  \mathsf{Tri}$ (bottom). We don't specify here whether the vertices lie in the top or bottom half-plane (this is considered in the next figure). The five ``cut'' symbols on the bottom lines stand for an unspecified distance.
In the symmetric cases, applying a horizontal symmetry leads to trifurcation points where the configuration at $0$ is $(+1,+1)$.
}
\label{fig:pair_infinite}
\end{figure}

Depending on the location of the infinite paths, the bottom part of Figure~\ref{fig:pair_infinite} depicts how one sets the configuration around 0 (notice in figure the numbers which indicate which external edge connects to which local point) --- due a parity constraint, it may in fact be later necessary to shift the picture by 1, see below. Then one can accomodate for the finite clusters between the ends of infinite paths and for the other infinite clusters, in a way similar to Lemma~\ref{lem:local_config} except that it may be necessary to introduce turns to the semi-infinite paths (i.e.~to choose the configuration at some more vertices), depending whether each end lies on the top or bottom part, as sketched in Figure~\ref{fig:bending} in the case of two neighbouring infinite paths and readily extended to a larger number (up to four may be needed). 
\begin{figure}
\includegraphics[height=3cm,page=1]{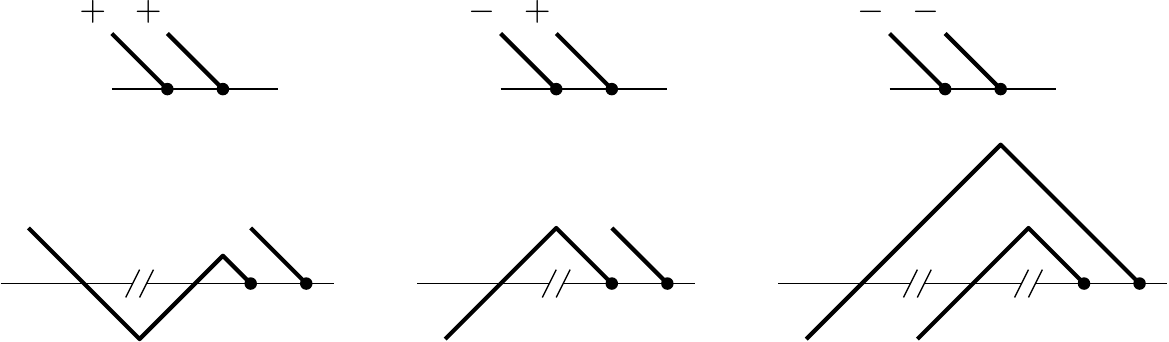}

\caption{Sketch of the ways to adapt the construction depending whether the ends of infinite paths lie in the top or bottom part. 
}
\label{fig:bending}
\end{figure}

In this way, we set so far the configuration at some vertices around 0 so that, disregarding the other vertices, the ends at the boundary of $\Li-M,M\Ri$ are connected as wished, and $0\in  \Tri$. In this construction, we could choose to use vertices around 0 without leaving empty space between them. 

This procedure uses a number of vertices that has same parity as $n^+_L+n^+_R$ (or equivalently as $n^-_L+n^-_R$). Since $\#\big(\Li-M,M\Ri\big)+n^+_L+n^+_R$ is even (cf.~Lemma~\ref{lem:local_config} above), the number of yet unsettled vertices inside $\Li-M,M\Ri$ is even. 
Up to a possible translation of the previously set configuration by 1, which would produce a trifurcation point at 1, we may thus assume that the numbers of unsettled vertices on the right and on the left of 0 are both even. We then complete the configuration in $\Li-M,M\Ri$ by length-two loops ($(+1,+1),(-1,-1)$) in the empty space. See Figure~\ref{fig:exemple_trifu} for an example.
\begin{figure}
\includegraphics[height=6cm]{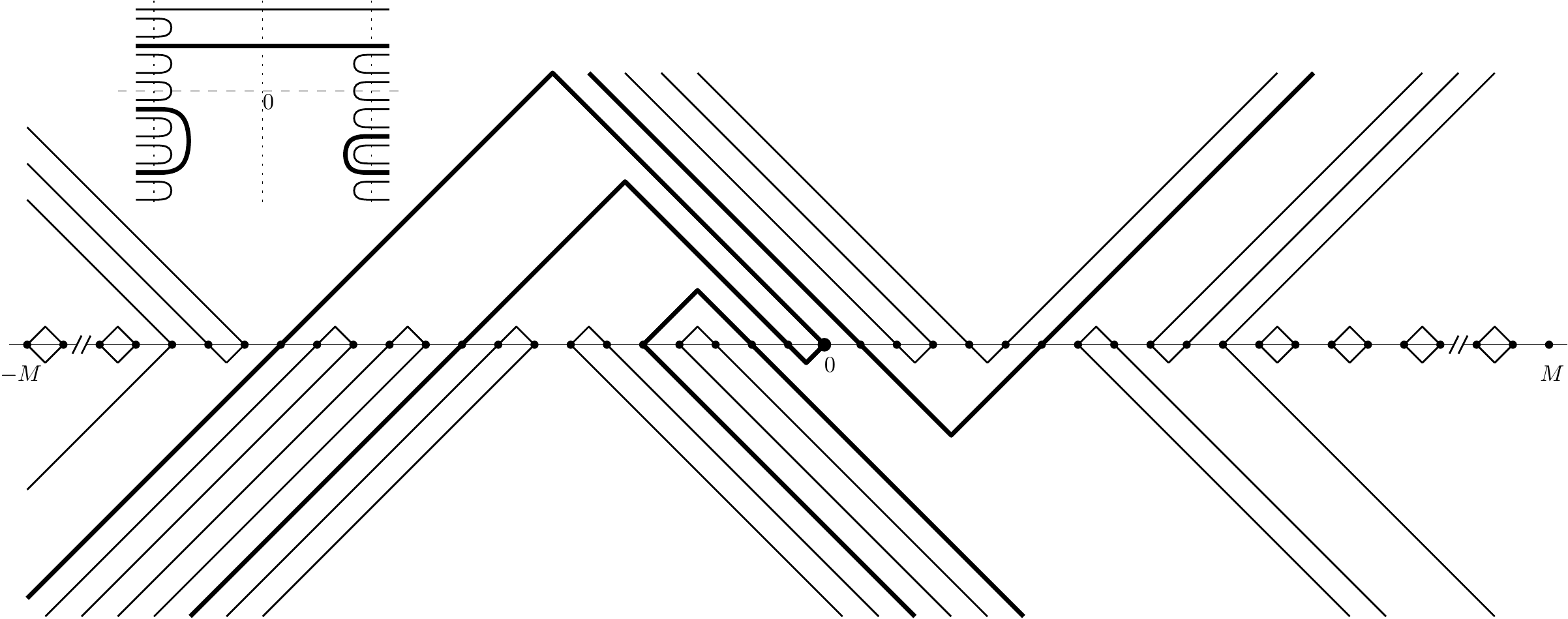}
\caption{Example of construction of a trifurcation by local modification. The top left scheme sketches the chosen matching of the ends of finite and infinite (thicker) paths. 
Since there are an even number of unsettled vertices on the left of 0 after the boundary ends have been matched, the trifurcation can be put at 0 and the configuration completed with short loops (the picture would otherwise have been shifted by 1).}
\label{fig:exemple_trifu}
\end{figure}

Matching two neighbouring ends of finite paths requires at most 2 vertices, and matching two ends of semi-infinite paths according to the previous rule requires at most 6 vertices. The fact that the number of ends is smaller that $M/{3}$ (from the definition of $A$) implies that there is indeed room for the construction.

Thus, on the event $A$, there is indeed a modification of the configuration within $\Li-M,M\Ri$ that leads to a configuration in $\{0\in \Tri\}\cup\{1\in \Tri\}$. Since $\prob(A)>0$, the finite energy property (Lemma~\ref{lem:finite_energy}) enables to conclude that $\prob(\{0\in \Tri\}\cup\{1\in \Tri\})>0$, hence $\prob(0\in \Tri)>0$ by translation invariance, which proves the lemma. 
\end{proof}

Let us now prove that $\npath \leq 2$ almost surely. Assume by contradiction that $\npath\ge3$ almost surely. Since the random subset $ \Tri$ is translation invariant and $\prob(0\in  \Tri)>0$, it follows from the ergodic theorem that 
\[\frac1n\#\big( \Tri\cap\Li1,n\Ri\big)\limites{}{n\to\infty}P(0\in  \Tri)>0\qquad\text{a.s.,}\]
where the lower bound is Lemma~\ref{lem:positive_trifurcation}. However, Lemma~\ref{lem:trifurcation_point} implies that, for all $n\in\N$, $\#\big( \Tri\cap\Li1,n\Ri\big)$ is larger than the number of infinite clusters that are involved in these trifurcations points; and, by the definition of $ \Tri$, these clusters all meet $\Li-2,n+3\Ri$, and therefore contribute to at least twice as many edges in the boundary of $\Li-2,n+3\Ri$ in $G$. In particular, the size of the boundary of $\Li-2,n+3\Ri$ in $G$ grows linearly in $n$. This contradicts Lemma~\ref{lem:size_boundary}.

\section{\texorpdfstring{$\npath \ne 2$.}{The number of paths is not 2.}}\label{sec:rewire}
In order to complete the proof of our main result, we have to rule out the possibility of $\npath =2$ which is the goal of this section.

\begin{proof}
Assume by contradiction that $\npath=2$ almost surely. Let us denote the two infinite paths by $\Cr_1$ and $\Cr_2$; for instance, we can choose indices so that $\Cr_1$ is closer to $0$ than $\Cr_2$ (equality cannot happen for parity reasons).

For any integers $k<l$, define the event $A_{\Li k,l\Ri}$ that both clusters meet $\Li k,l\Ri$:
\[A_{\Li k,l\Ri}=\{\Cr_1\cap\Li k,l\Ri\neq\emptyset,\, \Cr_2\cap\Li k,l\Ri\neq\emptyset\},\]
and for any integer $N$, let $B_N$ be the event that the boundary of $\Li1,N\Ri$ in $G$ has size smaller than $N$: 
\[B_N=\{\ntot(\omega_{|\Li1,N\Ri})\le N\}.\] 
Since $\prob(A_{\Li0,N\Ri})=\prob(A_{\Li-N/2,N/2\Ri})\uparrow 1$ and $\prob(B_N)\to1$ as $N\to\infty$ (by Lemma~\ref{lem:size_boundary}), we may take $N$ to be such that 
\[\prob(A_{\Li0,N\Ri})\ge \nicefrac 78
\qquad\text{and}\qquad \prob(B_N)\ge \nicefrac 78.\]
With $N$ established, let us pick $M$ such that
\[\prob(A_{\Li0,M\Ri})\ge 1-\frac14 4^{-N}.\]
For any integer $k<l$, let $R_{\Li k,l\Ri}$ be the maximum distance from $\Li k,l\Ri$ reached by a path in $G$ starting in $\Li k,l\Ri$ before coming back to $\Li k,l\Ri$ for the first time (and $R_{\Li k,l\Ri}=0$ if no path comes back). Since this distance is finite, there exists $r$ such that 
\[\prob(R_{\Li0,M\Ri}<r)\ge \nicefrac78.\] 
Let us finally define the event
\[C=A_{\Li0,N\Ri}\cap B_N\cap A_{\Li N+r,N+r+M\Ri}\cap\{R_{\Li 0,N\Ri}<r\}\cap \{R_{\Li N+r,N+r+M\Ri}<r\}.\]
and note already that
\begin{multline}\label{eqn:proba_c}
  \prob(C)\ge1-\prob(A_{\Li0,N\Ri}^c)-\prob(B_N^c)-2\prob(R_{\Li0,N\Ri}\ge r)-\prob(A_{\Li0,M\Ri}^c)\\
  \ge 1-\frac18-\frac18-\frac28-\frac144^{-N}>\frac14.
\end{multline}
Let us justify that, on the event $C$, it is possible to modify the configuration inside $\Li0,N\Ri$ in such a way that the event $A_{\Li N+r,N+r+M\Ri}$ is no longer satisfied. 

Assume $C$ holds. Due to $A_{\Li0,N\Ri}$, the infinite cluster $\Cr_1$ decomposes into one finite path that starts and ends in $\Li0,N\Ri$  and two semi-infinite paths that start in $\Li0,N\Ri$ and do not visit $\Li0,N\Ri$ again (by cutting the arbitrarily oriented path $\Cr_1$ at its very first and last visits in $\Li0,N\Ri$). Due to $R_{\Li 0,N\Ri}<r$ 
 the finite path does not visit $\Li N+r,N+r+M\Ri$ and due to $A_{\Li N+r,N+r+M\Ri}\cap\{R_{\Li N+r,N+r+M\Ri}<r\}$, exactly one of the semi-infinite paths does. 
The same holds for $\Cr_2$. We now claim that, by modifying the configuration in $\Li0,N\Ri$ it is possible to connect together the ends of these two semi-infinite paths starting in $\Li0,N\Ri$ and not visiting $\Li N+r,N+r+M\Ri$, without changing these semi-infinite paths outside $\Li0,N\Ri$; this results in $A_{\Li N+r,N+r+M\Ri}$ not holding anymore since one of the two infinite clusters is not visiting $\Li N+r,N+r+M\Ri$.

The possibility of such a modification comes from Lemma~\ref{lem:local_config}, whose applicability is granted by the event $B_N$ and the combination of the following two facts: 
\begin{itemize}
	\item due to planarity of the paths,
if exactly 4 semi-infinite paths exit $\Li0,N\Ri$, and only two of them meet $\Li N+r,N+r+M\Ri$, then these two can't separate the other two (denoted $x$ and $y$) in $\R^2\cup\{\infty\}$ 
and therefore, by planarity again, each of these pairs is separated by an even number of ends along the boundary of $\Li0,N\Ri$; 
	\item if two ends $x,y$ are separated by an even number of ends, then there exists clearly a non-crossing matching that matches $x$ to $y$. 
\end{itemize}

Therefore Lemma~\ref{lem:local_config} indeed applies to find a configuration that maintains the values of $n^\pm_L$ and $n^\pm_R$ and matches $x$ to $y$. 

\smallskip
Let us now conclude. Applying Lemma~\ref{lem:finite_energy} to the previously described modification, we get, by~\eqref{eqn:proba_c}, $\prob(\Ct)>\frac144^{-N}$. This contradicts the fact that $\Ct$ and $A_{\Li N+r,N+r+M\Ri}$ are disjoint, since that event has probability at least $1-\frac144^{-N}$. 
\end{proof}

\vspace{-.2cm} 

\section*{Acknowledgments}

We would like to thank Omer Angel for fruitful discussions in particular about parity arguments in the proof of the last section. 

\vspace{-.2cm} 

\bibliographystyle{siam}
\bibliography{bibli}
\end{document}